\newtheorem{theorem}{Theorem}[section]
\newtheorem{corollary}[theorem]{Corollary}
\newtheorem{lemma}[theorem]{Lemma}
\newtheorem{proposition}[theorem]{Proposition}
\theoremstyle{remark}
\newtheorem{definition}[theorem]{Definition}
\newtheorem{remark}[theorem]{Remark}
\newcommand{\CA}{\mathcal{A}}
\newcommand{\CB}{\mathcal{B}}
\newcommand{\CALD}{\mathcal{D}}
\newcommand{\CF}{\mathcal{F}}
\newcommand{\CT}{\mathcal{T}}
\newcommand{\CV}{\mathcal{V}}
\newcommand{\rmD}{\mathrm{D}}
\newcommand{\rmK}{\mathrm{K}}
\newcommand{\BBZ}{{\mathbb Z}}
\newcommand{\Hom}{\mathrm{Hom}}
\newcommand{\Ima}{\mathrm{Im}}
\newcommand{\Ker}{\mathrm{Ker}}
\newcommand{\Ext}{\mathrm{Ext}}
\newcommand{\Inj}{\mathrm{ Inj}}
\newcommand{\Cogen}{\mathrm{Cogen}}
\newcommand{\Modr}{\mathrm{ Mod}\text{-}}
\begin{document}


\title{A Note on Cosilting Modules}

\author{Flaviu Pop}
\thanks{}

\address{Flaviu Pop: "Babe\c s-Bolyai" University, Faculty of Economics and Business Administration, str. T. Mihali, nr. 58-60, 400591, Cluj-Napoca, Romania}

\email{flaviu.v@gmail.com; flaviu.pop@econ.ubbcluj.ro}

\date{\today}

\subjclass[2010]{}

\keywords{cosilting module; cosilting complex; $t$-structure}

\begin{abstract} The notion of cosilting module was recently introduced as a generalization of the notion of cotilting modules. In this paper, we give a characterization of (partial) cosilting modules in terms of two-term cosilting complexes. Moreover, we show that to every cosilting module could be associated a particular $t$-structure in the derived module category.
\end{abstract}

\maketitle

\section{Introduction} Tilting modules and tilting complexes are very important in the representation theory in order to compare different categories of modules or derived categories. Silting complexes, introduced by Keller and Vossieck \cite{KellerVossieck_1988}, are also important tools in order to study $t$-structures of the bounded derived categories, and they are a generalization of tilting complexes. This topic is intensively studied by many authors (see, for instance, \cite{AiharaIyama}, \cite{IyamaJorgensenYang_2014}, \cite{KoenigYang_2014}, \cite{PsaroudakisVitoria} and \cite{Wei_2013}). In \cite{AngeleriMarksVitoria_2015}, the authors introduced a concept of silting module as a common generalization of tilting modules over an arbitrary ring and support $\tau$-tilting modules over a finite dimensional algebra (see \cite{AdachiIyamaReiten}) and they study important properties of these modules. More exactly, they show that silting modules generate torsion classes that provide left approximations and every partial silting module admits an analog of the Bongartz complement.  Moreover, they also established a connection of these modules with (two-term) silting complexes and with certain $t$-structures and co-$t$-structures in the derived module category. In \cite{AngeleriMarksVitoria_preprint}, the authors give a relation between (partial) silting modules and ring epimorphisms and, they study in details the case of hereditary rings. In \cite{MarksStovicek}, it is shown that, for finite dimensional algebras of finite representation type, silting modules are in bijection with universal localizations. Recently, in \cite{AngeleriHrbek}, the authors give a classification of silting modules over commutative rings, establishing a bijective correspondence of them with Gabriel filters of finite type. The dual notion, of cosilting module, was independently introduced in \cite{BreazPop_preprint} and \cite{ZhangWei}, as a generalization of the concept of cotilting module. Following the idea from \cite{AngeleriMarksVitoria_2015}, where the authors characterize silting modules by the corresponding two-term silting complexes, in this paper we continue the work of \cite{BreazPop_preprint} and we give a characterization of (partial) cosilting modules in terms of two-term cosilting complexes (Proposition \ref{partial_cosilting} and Corollary \ref{cosilting_complex}). Moreover, we show that to every cosilting module could be associated a particular $t$-structure in the derived module category (Proposition \ref{cosilting_t_structure}).

Throughout this paper, by a ring $R$ we will understand a unital associative ring, an $R$-module is a right $R$-module and we will denote by $\mathrm{Mod}(R)$ the category of all right $R$-modules. If $R$ is a ring, we denote by $\Inj(R)$ the full subcategory of $\mathrm{Mod}(R)$ consisting in all injective $R$-modules. An $R$-module $X$ is said to be $T$-cogenerated if it can be embedded into a direct product of copies of $T$ and we denote by $\Cogen(T)$ the class of all $T$-cogenerated $R$-modules. For an $R$-module $Y$, we consider the following classes, denoted by $^{\circ}Y$ and $^{\perp}Y$, defined as follows:$$^{\circ}Y = \{X \in \mathrm{Mod}(R) | \Hom_{R}(X,Y) = 0\}$$ and $$^{\perp}Y = \{X \in \mathrm{Mod}(R) | \Ext^{1}_{R}(X,Y) = 0\}.$$

The unbounded homotopy (respectively, derived) category of $\mathrm{Mod}(R)$ will be denoted by $\rmK(R)$ (respectively, $\rmD(R)$), while the bounded homotopy (respectively, derived) category of $\mathrm{Mod}(R)$ will be denoted by $\rmK^{b}(R)$ (respectively, $\rmD^{b}(R)$). If $Y^{\bullet}$ is an object in $\rmD(R)$ and $n$ is an integer, we set the following orthogonal classes associated to $Y^{\bullet}$: $$^{\perp_{\geq n}}Y^{\bullet} = \{X^{\bullet} \in \mathrm{D}(R) | \Hom_{\mathrm{D}(R)}(X^{\bullet},Y^{\bullet}[i]) = 0\text{, for all } i \geq n \}$$respectively$$^{\perp_{\leq n}}Y^{\bullet} = \{X^{\bullet} \in \mathrm{D}(R) | \Hom_{\mathrm{D}(R)}(X^{\bullet},Y^{\bullet}[i]) = 0\text{, for all } i \leq n \}.$$
If $X$ is an $R$-module, then $X$ can be viewed as a complex concentrated in 0-th degree and it will be denoted by $X^{\bullet}$. If $\zeta : Q_{0} \to Q_{1}$ is an $R$-homomorphism, then it can be also viewed as a complex concentrated in degrees 0 and 1, and thus it will be denoted by $\zeta^{\bullet}$. If $X^{\bullet} = (X_{j},d_{j})$ is a complex of $R$-modules, i.e.$$X^{\bullet}: \dots \overset{d_{i-2}}\longrightarrow X_{i-1}\overset{d_{i-1}}\longrightarrow X_{i}\overset{d_{i}}\longrightarrow X_{i+1}\overset{d_{i+1}}\longrightarrow \dots$$ then we will denote by $H^{i}(X^{\bullet})$ the i-th cohomology of $X^{\bullet}$, i.e. $H^{i}(X^{\bullet})=\Ker(d_{i})/\Ima(d_{i-1})$, for some integer $i$.

\section{Cosilting Modules as two-term complexes}

In order to define the notion of (partial) cosilting module, we need to set the following class, defined as in \cite{BreazPop_preprint} and \cite{ZhangWei}. If $\zeta: Q_{0} \to Q_{1}$ is an $R$-homomorphism, then we define the class $\CB_{\zeta}$ as follows $$\CB_{\zeta}=\{X \in \Modr R \mid \Hom_{R}(X,\zeta) \text{ is an epimorphism} \}.$$

In the following lemma it is established some closure properties of the class $\CB_{\zeta}$.

\begin{lemma}\label{closure_prop}\cite[Lemma 2.3]{BreazPop_preprint} Let $\zeta: Q_{0} \to Q_{1}$ be an $R$-homomorphism. The following assertions hold.
\begin{itemize}
   \item[(1)] The class $\CB_{\zeta}$ is closed under direct sums.
   \item[(2)] If $Q_{1}$ is injective then the class $\CB_{\zeta}$ is closed under submodules.
   \item[(3)] If $Q_{0}$ is injective then the class $\CB_{\zeta}$ is closed under extensions.
\end{itemize}
\end{lemma}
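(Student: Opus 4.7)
The plan is to handle the three parts separately, exploiting the standard interaction between $\Hom$-functors and (co)limits, and using injectivity of $Q_{0}$ or $Q_{1}$ to turn $\Hom_{R}(-,\zeta)$ into an exact transformation of exact sequences.

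For (1), I would use the natural isomorphism $\Hom_{R}(\bigoplus_{i}X_{i},Q_{k})\cong\prod_{i}\Hom_{R}(X_{i},Q_{k})$ for $k=0,1$. Under this identification, $\Hom_{R}(\bigoplus_{i}X_{i},\zeta)$ becomes the product $\prod_{i}\Hom_{R}(X_{i},\zeta)$. Since each factor is surjective by hypothesis, the product is surjective, which is the claim.

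For (2), given a submodule $Y\hookrightarrow X$ with $X\in\CB_{\zeta}$, I would start from any $f\colon Y\to Q_{1}$ and use injectivity of $Q_{1}$ to extend it to $\tilde f\colon X\to Q_{1}$. Applying the hypothesis $X\in\CB_{\zeta}$ produces $g\colon X\to Q_{0}$ with $\zeta\circ g=\tilde f$; restricting $g$ to $Y$ lifts $f$ through $\zeta$. Thus $\Hom_{R}(Y,\zeta)$ is surjective.

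For (3), consider a short exact sequence $0\to X'\to X\to X''\to 0$ with $X',X''\in\CB_{\zeta}$, and the commutative diagram obtained by applying $\Hom_{R}(-,Q_{0})$ and $\Hom_{R}(-,Q_{1})$; injectivity of $Q_{0}$ guarantees that the top row is short exact. Given $\beta\colon X\to Q_{1}$, I would first lift the restriction $\beta|_{X'}$ to a map $\alpha'\colon X'\to Q_{0}$ using $X'\in\CB_{\zeta}$, then extend $\alpha'$ to $\alpha\colon X\to Q_{0}$ by the injectivity of $Q_{0}$. The difference $\beta-\zeta\circ\alpha$ vanishes on $X'$, hence factors as $\psi\circ\pi$ with $\pi\colon X\to X''$ and $\psi\colon X''\to Q_{1}$; applying $X''\in\CB_{\zeta}$ yields $\phi\colon X''\to Q_{0}$ with $\zeta\circ\phi=\psi$, and assembling gives $\beta=\zeta\circ(\alpha+\phi\circ\pi)$. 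This is essentially the short five/snake lemma in disguise, and it is the only part of the proof with any subtlety: one must notice that injectivity of $Q_{0}$ (rather than $Q_{1}$) is exactly what is needed to extend $\alpha'$, which is where the asymmetry between (2) and (3) comes from.
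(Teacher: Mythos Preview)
Your proposal is correct in all three parts. The paper itself does not supply a proof of this lemma: it is quoted verbatim from \cite[Lemma~2.3]{BreazPop_preprint} and used as a black box, so there is no in-paper argument to compare against. Your arguments are the standard ones and would serve as a complete proof; in particular your handling of part~(3), isolating the role of the injectivity of $Q_{0}$ in extending $\alpha'$ before passing to the quotient, is exactly the point of the hypothesis.
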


We recall from \cite{ColpiTonoloTrlifaj_1997} the definition of the notion of (partial) cotilting module. A right $R$-module $T$ is {\sl partial cotilting} if and only if $\Cogen(T) \subseteq {^{\perp}T}$ and the class ${^{\perp}T}$ is a torsion free class. A right $R$-module $T$ is {\sl cotilting} if and only if $\Cogen(T)={^{\perp}T}$. Now we define the notion of {\sl (partial) cosilting module} as a generalization of the notion of (partial) cotilting module.

\begin{definition} We say that an $R$-module $T$ is:
   \begin{itemize}
      \item[(1)] {\sl partial cosilting} ({\sl with respect to $\zeta$}), if there exists an injective copresentation of $T$ $$0 \to T \overset{f}\longrightarrow Q_{0} \overset{\zeta}\longrightarrow Q_{1}$$such that:
                 \begin{itemize}
                    \item[(a)] $T\in\CB_{\zeta}$, and
                    \item[(b)] the class $\CB_{\zeta}$ is closed under direct products;
                 \end{itemize}
	  \item[(2)] {\sl cosilting} ({\sl with respect to $\zeta$}), if there exists an injective copresentation $$0 \to T \overset{f}\longrightarrow Q_{0} \overset{\zeta}\longrightarrow Q_{1}$$ of $T$ such that $\Cogen(T)=\CB_{\zeta}$.
   \end{itemize}
\end{definition}

\begin{remark} Let $T$ be a right $R$-module. Then $T$ is (partial) cotilting if and only if $T$ is (partial) cosilting with respect to an epimorphic injective copresentation for $T$. We mention that there are many examples of cosilting modules, that are not cotilting modules (see, for example, \cite[Example 3.3(c)]{BreazPop_preprint}).

In \cite[Proposition 4.1.(c)]{Angeleri_preprint_2016} it is shown that the dual of a silting module is a cosilting module. More exactly, let $k$ be a commutative ring and let $A$ be a $k$-algebra. If $M$ is an $A$-module, we denote by $M^{+}$ the dual of $M$ with respect to an injective cogenerator of $\mathrm{Mod}(k)$. Let $\sigma:P_{-1} \to P_{0}$ be an $A$-homomorphism between projective $A$-modules. If $T$ is a silting module with respect to $\sigma$, then $T^{+}$ is a cosilting module with respect to $\sigma^{+}$.

We note that not all cosilting modules arise as duals of silting modules (as in the tilting-cotilting case).

In \cite{BreazZemlicka}, it is shown that, for general rings, the covering torsion free classes of modules are exactly the classes of the form $\Cogen(T)$, where $T$ is a cosilting module.
\end{remark}

\begin{lemma}\label{inclusions}\cite[Lemma 3.4]{BreazPop_preprint} If the $R$-module $T$ is partial cosilting with respect to the injective copresentation $\zeta : Q_{0} \to Q_{1}$, then $\Cogen(T) \subseteq \CB_{\zeta} \subseteq {^{\perp}T}$.
\end{lemma}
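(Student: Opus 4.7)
The plan is to prove the two inclusions separately, each exploiting a different structural ingredient of the partial cosilting hypothesis.

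For $\Cogen(T)\subseteq\CB_{\zeta}$, the strategy is a product-then-submodule argument. If $X\in\Cogen(T)$, then by definition there is a monomorphism $X\hookrightarrow T^{I}$ for some set $I$. Since $T\in\CB_{\zeta}$ and $\CB_{\zeta}$ is closed under direct products (both clauses of the definition of partial cosilting), the power $T^{I}$ lies in $\CB_{\zeta}$; then, because $Q_{1}$ is injective, part (2) of Lemma \ref{closure_prop} gives closure of $\CB_{\zeta}$ under submodules, and $X\in\CB_{\zeta}$ follows immediately.

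For $\CB_{\zeta}\subseteq{}^{\perp}T$, I plan to split the three-term injective copresentation into a short exact sequence. Factoring $\zeta$ as $Q_{0}\twoheadrightarrow\Ima(\zeta)\hookrightarrow Q_{1}$ produces a short exact sequence $0\to T\to Q_{0}\to\Ima(\zeta)\to 0$. Fix $X\in\CB_{\zeta}$ and apply $\Hom_{R}(X,-)$. Since $Q_{0}$ is injective, $\Ext^{1}_{R}(X,Q_{0})=0$, so the long exact sequence identifies $\Ext^{1}_{R}(X,T)$ with the cokernel of the natural map $\Hom_{R}(X,Q_{0})\to\Hom_{R}(X,\Ima(\zeta))$. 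Proving the second inclusion thus reduces to lifting an arbitrary morphism $g\colon X\to\Ima(\zeta)$ through the surjection $Q_{0}\twoheadrightarrow\Ima(\zeta)$.

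The main technical point — and in my view the only nontrivial step — is precisely this lifting. Post-composing $g$ with the monomorphism $\iota\colon\Ima(\zeta)\hookrightarrow Q_{1}$ produces a morphism $\iota g\colon X\to Q_{1}$; the assumption $X\in\CB_{\zeta}$ means that $\Hom_{R}(X,\zeta)$ is surjective, so $\iota g=\zeta h$ for some $h\colon X\to Q_{0}$. Writing $\zeta=\iota\pi$ with $\pi\colon Q_{0}\twoheadrightarrow\Ima(\zeta)$, one gets $\iota g=\iota\pi h$, and cancelling the monomorphism $\iota$ yields $g=\pi h$. Hence the cokernel vanishes, $\Ext^{1}_{R}(X,T)=0$, and the chain of inclusions is complete.
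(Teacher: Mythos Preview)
Your argument is correct. Both inclusions are handled by the natural route: for $\Cogen(T)\subseteq\CB_{\zeta}$ you use that $T\in\CB_{\zeta}$, product-closure (the partial cosilting hypothesis), and submodule-closure from Lemma~\ref{closure_prop}(2); for $\CB_{\zeta}\subseteq{^{\perp}T}$ you factor $\zeta$ through its image, use injectivity of $Q_{0}$ to reduce $\Ext^{1}_{R}(X,T)$ to a lifting problem, and solve the lifting via surjectivity of $\Hom_{R}(X,\zeta)$ together with the fact that $\Ima(\zeta)\hookrightarrow Q_{1}$ is monic. Note, incidentally, that the second inclusion uses only that $Q_{0}$ is injective and does not require either clause of the partial cosilting definition.

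As for comparison: the present paper does not supply its own proof of this lemma; it is quoted verbatim from \cite[Lemma~3.4]{BreazPop_preprint} without argument. Your proof is the standard one and is essentially what one finds in that reference.
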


\begin{lemma}\label{homotopy} Let $d_{0}:X_{0} \to X_{1}$ be a homomorphism with the kernel $K$ and let $\zeta: Q_{0} \to Q_{1}$ be a homomorphism in $\Inj(R)$. Then $K\in\CB_{\zeta}$ if and only if for every homomorphism $f\in\Hom_{R}(X_{0},Q_{1})$ there are $s_{0}:X_{0} \to Q_{0}$ and $s_{1}:X_{1} \to Q_{1}$ such that $f=s_{1}d_{0}+\zeta s_{0}$.
\end{lemma}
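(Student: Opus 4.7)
The statement is a biconditional, so the plan is to prove each direction separately, using injectivity of $Q_0$ and $Q_1$ as the essential technical tool in each direction. The key observation is that the restriction map $\Hom_R(X_0,Q_0) \to \Hom_R(K,Q_0)$ is surjective because $K \hookrightarrow X_0$ and $Q_0$ is injective, and similarly $\Hom_R(X_1,Q_1) \to \Hom_R(\Ima(d_0),Q_1)$ is surjective by injectivity of $Q_1$, since $\Ima(d_0) \hookrightarrow X_1$.

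For the forward implication, I would start with an arbitrary $f \in \Hom_R(X_0,Q_1)$ and restrict it to a map $f|_K : K \to Q_1$. Since $K \in \CB_\zeta$, there exists $h : K \to Q_0$ with $\zeta h = f|_K$. Using injectivity of $Q_0$, I would extend $h$ along the inclusion $K \hookrightarrow X_0$ to obtain $s_0 : X_0 \to Q_0$ with $s_0|_K = h$. Then $f - \zeta s_0$ vanishes on $K = \ker(d_0)$, so it factors through $\Ima(d_0) \hookrightarrow X_1$; using injectivity of $Q_1$ I would extend this factorisation to $s_1 : X_1 \to Q_1$, and a direct check yields $f = s_1 d_0 + \zeta s_0$.

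For the reverse implication, I would start with an arbitrary $g : K \to Q_1$ and, by injectivity of $Q_1$, extend it along $K \hookrightarrow X_0$ to $f : X_0 \to Q_1$. Applying the homotopy hypothesis produces $s_0$ and $s_1$ with $f = s_1 d_0 + \zeta s_0$. Restricting this equality to $K$ kills the $s_1 d_0$ term (since $d_0|_K = 0$), leaving $g = f|_K = \zeta(s_0|_K)$. Thus $s_0|_K : K \to Q_0$ is the required preimage of $g$ under $\Hom_R(K,\zeta)$, showing $K \in \CB_\zeta$.

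Neither direction presents a serious obstacle; the whole argument is essentially a standard diagram chase, and the only subtlety to highlight is the twofold role of injectivity: on $Q_0$ to lift $h$ off the submodule $K$, and on $Q_1$ to extend across $d_0$ from $\Ima(d_0)$ to $X_1$ (and, in the converse, from $K$ to $X_0$). This symmetry is what makes the hypothesis $\zeta \in \Inj(R)$ indispensable and why both injective hypotheses are really needed on equal footing.
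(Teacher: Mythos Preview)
Your proposal is correct and follows essentially the same route as the paper: in the forward direction you restrict $f$ to $K$, lift via $K\in\CB_\zeta$, extend by injectivity of $Q_0$ to get $s_0$, then factor $f-\zeta s_0$ through the image of $d_0$ and extend by injectivity of $Q_1$ to get $s_1$; in the converse you extend $g$ to $X_0$ using injectivity of $Q_1$ and restrict the homotopy identity back to $K$. The only cosmetic difference is that the paper phrases the factorisation step via the quotient $X_0/K$ and the induced monomorphism $\overline{d_0}:X_0/K\to X_1$, whereas you go directly through $\Ima(d_0)\hookrightarrow X_1$; these are the same argument up to the canonical isomorphism $X_0/K\cong\Ima(d_0)$.
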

\begin{proof} Assume that $K \in \CB_{\zeta}$ and let $f\in\Hom_{R}(X_{0},Q_{1})$. Let $\sigma:K \to X_{0}$ be the inclusion. Since $f \sigma \in \Hom_{R}(K,Q_{1})$ and $\Hom_{R}(K,\zeta)$ is an epimorphism, there is $h \in \Hom_{R}(K,Q_{0})$ such that $\zeta h = f \sigma$. By the injectivity of $Q_{0}$, there is $s_{0} : X_{0} \to Q_{0}$ such that $h = s_{0} \sigma$. We have that $(f - \zeta s_{0}) \sigma = 0$, hence $K = \Ima(\sigma) \subseteq \Ker(f-\zeta s_{0})$. Then we can define the morphism $\overline{f - \zeta s_{0}} : X_{0}/K \to Q_{1}$, induced by $f - \zeta s_{0}$. Also, the morphism $\overline{d_{0}} : X_{0}/K \to X_{1}$, induced by $d_{0}$, is in fact a monomorphism. Since $Q_{1}$ is injective, there is $s_{1} : X_{1} \to Q_{1}$ such that $\overline{f - \zeta s_{0}} = s_{1} \overline{d_{0}}$. Therefore $f = s_{1}d_{0} + \zeta s_{0}$.

\[\xymatrix{0\ar[r] &K\ar[r]^{\sigma}\ar@{-->}[d]_{h}  &X_{0}\ar[r]^{d_{0}}\ar[d]^{f}\ar@{-->}[dl]^{s_{0}} &X_{1}\ar@{-->}[dl]^{s_{1}}\\
&Q_{0}\ar[r]^{\zeta} &Q_{1}\\
}\]

Conversely, let $g\in\Hom_{R}(K,Q_{1})$. Since $Q_{1}$ is injective, there is $f:X_{0} \to Q_{1}$ such that $g=f\sigma$. From hypothesis, there are $s_{0}:X_{0} \to Q_{0}$ and $s_{1}:X_{1} \to Q_{1}$ such that $f=s_{1}d_{0} + \zeta s_{0}$. Then $\Hom_{R}(K,\zeta)(s_{0}\sigma) = \zeta s_{0}\sigma = (f-s_{1}d_{0})\sigma = f\sigma - s_{1}d_{0}\sigma = f\sigma = g$. It follows that $\Hom_{R}(K,\zeta)$ is an epimorphism, hence $K\in\CB_{\zeta}$.

\end{proof}

Now we define the notion of cosilting complex, firstly introduced by Zhang and Wei in \cite{ZhangWei}. Before stating this definition we recall that, if $X^{\bullet}$ is an object in $\rmD(R)$, then $\mathrm{Adp}_{\rmD(R)}(X^{\bullet})$ denotes the class of complexes isomorphic in the derived category $\rmD(R)$ to a direct summand of some direct product of copies of $X^{\bullet}$. Moreover, we say that an object $Y^{\bullet}$ of $\rmD(R)$ {\sl cogenerates} $\rmD(R)$, if whenever an object $X^{\bullet}$ in $\rmD(R)$ with $\Hom_{\rmD(R)}(X^{\bullet},Y^{\bullet}[i])=0$ for all $i\in\BBZ$, then $X^{\bullet}=0$ in $\rmD(R)$.

\begin{definition} A complex $T^{\bullet}$ is said to be {\sl cosilting} if it satisfies the following conditions:
\begin{itemize}
    \item[(1)] $T^{\bullet} \in \rmK^{b}(\Inj(R))$;
    \item[(2)] $T^{\bullet}$ is prod-semi-selforthogonal, i.e. $\Hom_{\rmD(R)}(T^{\bullet I},T^{\bullet}[i]) = 0$, for all sets $I$ and for all $i>0$;
    \item[(3)] $T^{\bullet}$ cogenerates $\rmD(R)$.
\end{itemize}
A complex $T^{\bullet}$ which satisfies the conditions (1) and (2) from above definition is said to be {\sl partial cosilting}.
\end{definition}

We mention that we can freely interchange $\rmD(R)$ with $\rmK(R)$ from the above definitions since, for all $X^{\bullet}\in\rmK(R)$ and for all $Y^{\bullet}\in\rmK^{b}(\Inj(R))$, we have the equality $\Hom_{\rmD(R)}(X^{\bullet},Y^{\bullet}) = \Hom_{\rmK(R)}(X^{\bullet},Y^{\bullet})$ (for more details, see \cite{Keller_1998}).

Also, we mention that in \cite{ZhangWei} a complex $T^{\bullet}$ is said to be a {\sl cosilting} complex if it satisfies the conditions (1) and (2) from the above definition and moreover the following condition:
\begin{itemize}
   \item [(3$^{\prime}$)] $\rmK^{b}(\Inj(R))$ coincides with the smallest triangulated subcategory containing $\mathrm{Adp}_{\rmD(R)}(T^{\bullet})$.
\end{itemize}
holds. We note that the condition (3$^{\prime}$) from the definition of cosilting complex, given by Zhang and Wei, implies the condition (3) from the definition above, i.e. $T^{\bullet}$ cogenerates $\rmD(R)$.

We denote by $\rmD^{\leq 0}$ (respectively, by $\rmD^{\geq 0}$) the subcategory of complexes with cohomologies lying in non-positive (respectively, non-negative) degrees.

\begin{lemma}\label{zero_homology} Let $\zeta:Q_{0} \to Q_{1}$ be a homomorphism in $\Inj(R)$ with $T=\Ker(\zeta)$. Then the following assertions hold:
\begin{itemize}
   \item[(a)] If $X^{\bullet} \in \rmD^{\geq 0}$, then $X^{\bullet} \in {^{\perp_{> 0}}\zeta^{\bullet}}$ if and only if $H^{0}(X^{\bullet}) \in \CB_{\zeta}$.
   \item[(b)] If $X^{\bullet} \in \rmD^{\leq 0}$, then $X^{\bullet} \in {^{\perp_{\leq 0}}\zeta^{\bullet}}$ if and only if $H^{0}(X^{\bullet}) \in {^{\circ}T}$.
\end{itemize}
\end{lemma}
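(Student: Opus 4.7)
My plan for both parts is to replace $X^\bullet$ by a convenient truncation that is quasi-isomorphic to it, then reduce the $\Hom$ in $\rmD(R)$ to a $\Hom$ in $\rmK(R)$ (using the fact, mentioned just after the definition of cosilting complex, that this is legitimate because $\zeta^\bullet\in\rmK^b(\Inj(R))$), and finally compute the chain maps and homotopies explicitly degree by degree. The observation that makes the degree-by-degree computation tractable is that $\zeta^\bullet[i]$ is supported in only two consecutive degrees, so all but one value of $i$ will force the chain maps into a zero component of the truncation.

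For part (a), I would replace $X^\bullet$ by $Y^\bullet=\tau^{\geq 0}X^\bullet$, which has $Y_j=0$ for $j<0$, $Y_0=X_0/\Ima(d_{-1})$, $Y_j=X_j$ for $j\geq 1$, satisfies $H^0(Y^\bullet)=H^0(X^\bullet)$, and is quasi-isomorphic to $X^\bullet$ because $H^i(X^\bullet)=0$ for $i<0$. For $i\geq 2$ the complex $\zeta^\bullet[i]$ is concentrated in degrees $-i,1-i\leq -1$, where $Y^\bullet$ vanishes, so $\Hom_{\rmK(R)}(Y^\bullet,\zeta^\bullet[i])=0$ trivially. The only nontrivial case is $i=1$: a chain map $Y^\bullet\to\zeta^\bullet[1]$ is determined by an arbitrary homomorphism $f_0:Y_0\to Q_1$, and a short computation shows that $f_0$ is null-homotopic if and only if it can be written as $h_1\bar d_0+\zeta h_0$ for some $h_0:Y_0\to Q_0$, $h_1:Y_1\to Q_1$ (up to the usual sign change coming from the shifted differential). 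Applying Lemma~\ref{homotopy} to the map $\bar d_0:Y_0\to Y_1$ whose kernel is precisely $\Ker(d_0)/\Ima(d_{-1})=H^0(X^\bullet)$, this happens for all $f_0$ if and only if $H^0(X^\bullet)\in\CB_\zeta$, finishing part (a).

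For part (b) I use the dual truncation $Y^\bullet=\tau^{\leq 0}X^\bullet$, so $Y_j=X_j$ for $j<0$, $Y_0=\Ker(d_0)$, and $Y_j=0$ for $j\geq 1$; again $H^0(Y^\bullet)=H^0(X^\bullet)$ and $Y^\bullet\simeq X^\bullet$ in $\rmD(R)$ because $H^i(X^\bullet)=0$ for $i>0$. For $i\leq -1$, $\zeta^\bullet[i]$ is supported in degrees $\geq 1$ where $Y^\bullet$ vanishes, so the Hom vanishes automatically. The critical case $i=0$ reduces to a chain map $f:Y^\bullet\to\zeta^\bullet$ having only the component $f_0:Y_0\to Q_0$ potentially nonzero; the chain condition at degree $0$ gives $\zeta f_0=0$ (so $f_0$ lands in $T$), while the chain condition at degree $-1$ gives $f_0\circ d_{-1}=0$ (so $f_0$ factors through $\Ker(d_0)/\Ima(d_{-1})=H^0(X^\bullet)$). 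Since every possible homotopy $h_j:Y_j\to\zeta^\bullet_{j-1}$ has target $0$ or source $0$, there are no nontrivial null-homotopies, and one obtains $\Hom_{\rmK(R)}(Y^\bullet,\zeta^\bullet)\cong\Hom_R(H^0(X^\bullet),T)$. Vanishing of this group is exactly the statement $H^0(X^\bullet)\in{^\circ T}$.

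The only delicate step is the bookkeeping in part (a) to match the homotopy equation coming from the shifted differential $d^{\zeta^\bullet[1]}_{-1}=-\zeta$ with the equation $f=s_1d_0+\zeta s_0$ from Lemma~\ref{homotopy}; this is just absorbing a sign into $s_0$. Everything else is routine once one is careful about indices and uses that $\tau^{\geq 0}$ (resp.\ $\tau^{\leq 0}$) preserves $H^0$.
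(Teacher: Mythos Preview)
Your proposal is correct and follows essentially the same approach as the paper: replace $X^{\bullet}$ by a representative concentrated in non-negative (resp.\ non-positive) degrees, observe that all but one shift contributes trivially because $\zeta^{\bullet}$ is two-term, and handle the remaining case by an explicit chain-map/homotopy computation (invoking Lemma~\ref{homotopy} for part (a) and a direct identification $\Hom_{\rmK(R)}(Y^{\bullet},\zeta^{\bullet})\cong\Hom_{R}(H^{0}(X^{\bullet}),T)$ for part (b)). The only cosmetic difference is that the paper phrases the replacement as ``without loss of generality $X_{i}=0$ for $i<0$'' (resp.\ $i>0$) rather than naming the truncation $\tau^{\geq 0}$ (resp.\ $\tau^{\leq 0}$) explicitly, so its $H^{0}$ appears as $\Ker(d_{0})$ (resp.\ $X_{0}/\Ima(d_{-1})$) instead of the quotient $\Ker(d_{0})/\Ima(d_{-1})$; the computations are otherwise identical.
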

\begin{proof}(a) Let $X^{\bullet} \in \rmD^{\geq 0}$. Then we may assume, without loss of generality, that  $$X^{\bullet} : \dots \longrightarrow 0 \longrightarrow 0 \longrightarrow X_{0}\overset{d_{0}}\longrightarrow X_{1}\overset{d_{1}}\longrightarrow  X_{2}\overset{d_{2}}\longrightarrow \dots$$We denote by $K$ the cohomology in the zero degree of $X^{\bullet}$, i.e. $K = H^{0}(X^{\bullet}) = \Ker(d_{0})$, and consider $\sigma : K \to X_{0}$ to be the inclusion.

Assume that $X^{\bullet} \in {^{\perp_{> 0}}\zeta^{\bullet}}$. Since $\Hom_{\rmD(R)}(X^{\bullet},\zeta^{\bullet}[1]) = 0$ it follows, by applying Lemma \ref{homotopy}, that $K\in\CB_{\zeta}$.

Conversely, assume that $K \in \CB_{\zeta}$. Since $\zeta^{\bullet}$ is a two-term complex, concentrated in degrees 0 and 1, it is obvious that $\Hom_{\rmK(R)} (X^{\bullet},\zeta^{\bullet}[i]) = 0$, for all $i \geq 2$. Applying Lemma \ref{homotopy}, we obtain $\Hom_{\rmK(R)} (X^{\bullet},\zeta^{\bullet}[1]) = 0$. Therefore $X^{\bullet} \in {^{\perp_{> 0}}\zeta^{\bullet}}$.

(b) Let $X^{\bullet} \in \rmD^{\leq 0}$. Then we may assume, without loss of generality, that $$X^{\bullet}: \dots \longrightarrow X_{-2} \overset{d_{-2}}\longrightarrow X_{-1} \overset{d_{-1}}\longrightarrow X_{0} \longrightarrow 0 \longrightarrow 0 \longrightarrow\dots$$Hence $H^{0}(X^{\bullet}) = X_{0}/\Ima(d_{-1})$.

Assume that $X^{\bullet} \in {^{\perp_{\leq 0}}\zeta^{\bullet}}$. Let $f \in \Hom_{R}(X_{0}/\Ima(d_{-1}), T)$. Since the composition $\sigma f \pi$ lies in $\Hom_{R}(X_{0}, Q_{0})$, where $\sigma : T \to Q_{0}$ is the inclusion and $\pi:X_{0} \to X_{0}/\Ima(d_{-1})$ is the canonical epimorphism and taking into account that $(\sigma f \pi) d_{-1} = 0 = \zeta (\sigma f \pi)$, we have that $(...,0,0,\sigma f \pi,0,0,...)$ is a chain map from $X^{\bullet}$ to $\zeta^{\bullet}$ and we will denote it by $f^{\bullet}$. By hypothesis, $f^{\bullet}$ vanishes in $\rmK(R)$, hence $\sigma f \pi = 0$, so that $f = 0$. It follows that $\Hom_{R}(X_{0}/\Ima(d_{-1}), T) = 0$.

Conversely, suppose that $H^{0}(X^{\bullet}) \in {^{\circ}T}$. Is obvious that $\Hom_{\rmK(R)}(X^{\bullet},\zeta^{\bullet}[i]) = 0$, for all $i<0$.  Let $f^{\bullet} \in \Hom_{\rmK(R)}(X^{\bullet},\zeta^{\bullet})$. Since $f^{\bullet} = (\dots, 0, 0, f, 0, 0, \dots)$ is a chain map, hence $fd_{-1} = 0 = \zeta f$, we have $\Ima(d_{-1}) \subseteq \Ker(f)$ and $\Ima(f) \subseteq \Ker(\zeta)$. It follows that the induced map by $f$, i.e. $\overline{f} : X_{0}/\Ima(d_{-1}) \to \Ker(\zeta)$, is well-defined. By assumption, $\overline{f} = 0$ in $\mathrm{Mod}(R)$, hence $f = 0$ in $\mathrm{Mod}(R)$, so that $f^{\bullet} = 0$ in $\rmK(R)$. Therefore $X^{\bullet} \in {^{\perp_{\leq 0}}\zeta^{\bullet}}$.
\end{proof}

\begin{proposition}\label{partial_cosilting} Let $\zeta:Q_{0} \to Q_{1}$ be a homomorphism in $\Inj(R)$ with $T=\Ker(\zeta)$. The following statements are equivalent:
\begin{itemize}
   \item[(a)] $T$ is a partial cosilting $R$-module with respect to $\zeta$;
   \item[(b)] \begin{itemize}
                 \item[(i)] $\Hom_{\rmD(R)}(\zeta^{\bullet I},\zeta^{\bullet}[i]) = 0$, for all sets $I$ and for all integers $i>0$;
                 \item[(ii)] The class ${^{\perp_{>0}}\zeta^{\bullet}} \bigcap \rmD^{\geq 0}$ is closed under direct products.
              \end{itemize}
   \item[(c)] \begin{itemize}
                 \item[(i)] $\zeta^{\bullet}$ is a partial cosilting complex;
                 \item[(ii)] The class ${^{\perp_{>0}}\zeta^{\bullet}} \bigcap \rmD^{\geq 0}$ is closed under direct products.
              \end{itemize}
\end{itemize}
\end{proposition}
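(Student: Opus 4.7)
The plan is to use Lemma \ref{zero_homology}(a) as a dictionary translating the module-theoretic condition ``$T\in\CB_\zeta$ and $\CB_\zeta$ is closed under direct products'' into the complex-theoretic condition (ii) on ${^{\perp_{>0}}\zeta^\bullet}\cap\rmD^{\geq 0}$. The equivalence (b)$\Leftrightarrow$(c) is essentially tautological: since $Q_0$ and $Q_1$ are injective and $\zeta^\bullet$ is two-term, $\zeta^\bullet\in\rmK^b(\Inj(R))$ automatically, so condition (c)(i) is nothing but (b)(i). Thus the real content is (a)$\Leftrightarrow$(b).

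For (a)$\Rightarrow$(b), first I would identify the product $\zeta^{\bullet I}$ in $\rmD(R)$ with the componentwise two-term complex $Q_0^I\to Q_1^I$; this is legitimate because products of bounded complexes of injectives can be computed componentwise and products of injective modules are injective. This complex lies in $\rmD^{\geq 0}$ with $H^0(\zeta^{\bullet I})=T^I$. Since $T\in\CB_\zeta$ and $\CB_\zeta$ is closed under direct products, $T^I\in\CB_\zeta$, and Lemma \ref{zero_homology}(a) yields (b)(i). For (b)(ii), given a family $\{X_i^\bullet\}\subseteq{^{\perp_{>0}}\zeta^\bullet}\cap\rmD^{\geq 0}$, the fact that products are exact in $\mathrm{Mod}(R)$ implies that cohomology commutes with products, so $\prod X_i^\bullet\in\rmD^{\geq 0}$ and $H^0(\prod X_i^\bullet)=\prod H^0(X_i^\bullet)$. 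Each $H^0(X_i^\bullet)$ lies in $\CB_\zeta$ by Lemma \ref{zero_homology}(a), hence so does the product, and a second application of Lemma \ref{zero_homology}(a) places $\prod X_i^\bullet$ back in ${^{\perp_{>0}}\zeta^\bullet}$.

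For (b)$\Rightarrow$(a), specializing (b)(i) to $|I|=1$ gives $\zeta^\bullet\in{^{\perp_{>0}}\zeta^\bullet}$; since $\zeta^\bullet\in\rmD^{\geq 0}$ with $H^0(\zeta^\bullet)=T$, Lemma \ref{zero_homology}(a) yields $T\in\CB_\zeta$. To show closure of $\CB_\zeta$ under products, take $\{X_i\}\subseteq\CB_\zeta$ and view each $X_i$ as the stalk complex $X_i^\bullet$ in degree $0$. By Lemma \ref{zero_homology}(a), each $X_i^\bullet\in{^{\perp_{>0}}\zeta^\bullet}\cap\rmD^{\geq 0}$, so by (b)(ii) the product is in the same class; since this product, viewed as a complex, is the stalk of $\prod X_i$ in degree $0$, one last invocation of Lemma \ref{zero_homology}(a) gives $\prod X_i\in\CB_\zeta$.

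The main technical point is keeping track of products in $\rmD(R)$: one must verify that $\zeta^{\bullet I}$ really is the componentwise complex (so that its $H^0$ equals $T^I$), and that the derived product of stalk complexes concentrated in degree $0$ is again a stalk complex in degree $0$. Both reduce to the AB4$^*$ property of $\mathrm{Mod}(R)$ together with the fact that our complexes are bounded and, in the relevant places, consist of injectives. Once these identifications are in hand, Lemma \ref{zero_homology}(a) does all the heavy lifting.
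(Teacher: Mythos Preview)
Your proposal is correct and follows essentially the same strategy as the paper: both directions of (a)$\Leftrightarrow$(b) translate between $\CB_\zeta$ and ${}^{\perp_{>0}}\zeta^\bullet\cap\rmD^{\geq 0}$ via Lemma~\ref{zero_homology}(a), and (b)$\Leftrightarrow$(c) is tautological. The only cosmetic difference is that in (b)$\Rightarrow$(a) the paper verifies $T\in\CB_\zeta$ by an explicit null-homotopy computation (extending $h:T\to Q_1$ to $Q_0$ and unpacking $\Hom_{\rmK(R)}(\zeta^\bullet,\zeta^\bullet[1])=0$), whereas you simply apply Lemma~\ref{zero_homology}(a) to $X^\bullet=\zeta^\bullet$ itself---your packaging is slightly cleaner but amounts to the same argument.
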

\begin{proof}(a)$\Rightarrow$(b) Suppose that $T$ is partial cosilting with respect to $\zeta$. Then $T \in \CB_{\zeta}$ and the class $\CB_{\zeta}$ is closed under direct products. If $I$ is a set, then is obvious that $\Hom_{\rmK(R)}(\zeta^{\bullet I},\zeta^{\bullet}[i]) = 0$, for all $i \geq 2$. Since $T^{I} \in \CB_{\zeta}$, it follows, by applying Lemma \ref{homotopy}, that $\Hom_{\rmK(R)}(\zeta^{\bullet I},\zeta^{\bullet}[1]) = 0$.

Now, let $X^{\bullet}_{\alpha}$ be a set-indexed family of objects in ${^{\perp_{>0}}\zeta^{\bullet}} \bigcap \rmD^{\geq 0}$. Is obvious that $\prod_{\alpha} X^{\bullet}_{\alpha} \in \rmD^{\geq 0}$. Since $X^{\bullet}_{\alpha} \in {^{\perp_{>0}}\zeta^{\bullet}}$, it follows, by Lemma \ref{zero_homology}(a), that $H^{0}(X^{\bullet}_{\alpha}) \in \CB_{\zeta}$ and, since $\CB_{\zeta}$ is closed under direct products, we have that $\prod_{\alpha} H^{0}(X^{\bullet}_{\alpha}) \in \CB_{\zeta}$. Since $H^{0}$ commutes with direct products, we obtain $H^{0}(\prod_{\alpha} X^{\bullet}_{\alpha}) \in \CB_{\zeta}$, hence, by Lemma \ref{zero_homology}(a), $\prod_{\alpha} X^{\bullet}_{\alpha} \in {^{\perp_{>0}}\zeta^{\bullet}}$.

(b)$\Rightarrow$(a) Let $h\in\Hom_{R}(T,Q_{1})$. Since $Q_{1}$ is injective, there is a morphism $f \in \Hom_{R}(Q_{0},Q_{1})$ such that $h = f \sigma$, where $\sigma:T \to Q_{0}$ is the inclusion. If we denote the chain map $(\dots,0,0,f,0,0,\dots)$ by $f^{\bullet}$, so that $f^{\bullet} \in \Hom_{\rmK(R)}(\zeta^{\bullet}, \zeta^{\bullet}[1])$, we have by $(i)$ that $f^{\bullet} = 0$ in the homotopy category $\rmK(R)$, hence there are the morphisms $s_{0} : Q_{0} \to Q_{0}$ and $s_{1} : Q_{1} \to Q_{1}$ such that $f = s_{1}\zeta + \zeta s_{0}$. Since $h = f \sigma = \zeta s_{0} \sigma$, we obtain $\Hom_{R}(T,\zeta)(s_{0} \sigma) = h$. Therefore $T\in\CB_{\zeta}$.

Let $X_{\alpha}\in\CB_{\zeta}$, be a set-indexed family of $R$-modules. Since $\Hom_{R}(X_{\alpha},\zeta)$ is an epimorphism, it follows that $X_{\alpha}^{\bullet} \in {^{\perp_{>0}}\zeta^{\bullet}}$, hence $X_{\alpha}^{\bullet} \in {^{\perp_{>0}}\zeta^{\bullet}} \bigcap \rmD^{\geq 0}$. By (ii), we obtain that $\prod_{\alpha} X_{\alpha}^{\bullet} \in {^{\perp_{>0}}\zeta^{\bullet}} \bigcap \rmD^{\geq 0}$, so that $\prod_{\alpha} X_{\alpha}^{\bullet} \in {^{\perp_{>0}}\zeta^{\bullet}}$. By Lemma \ref{zero_homology}(a), we have $\prod_{\alpha} X_{\alpha}\in\CB_{\zeta}$. Therefore the class $\CB_{\zeta}$ is closed under direct products.

(b)$\Leftrightarrow$(c) It is obvious.
\end{proof}

For a complex $X^{\bullet}=(X_{i},d_{i})$ of $R$-modules, $$X^{\bullet}:\dots\overset{d_{n-3}}\longrightarrow X_{n-2}\overset{d_{n-2}}\longrightarrow X_{n-1}\overset{d_{n-1}}\longrightarrow X_{n}\overset{d_{n}}\longrightarrow X_{n+1}\overset{d_{n+1}}\longrightarrow X_{n+2}\overset{d_{n+2}}\longrightarrow \dots,$$we define the following truncations:$$\tau^{\leq n}(X^{\bullet}): \dots\overset{d_{n-3}}\longrightarrow X_{n-2}\overset{d_{n-2}}\longrightarrow X_{n-1}\overset{d_{n-1}}\longrightarrow \Ker(d_{n})\overset{}\longrightarrow 0\overset{}\longrightarrow 0\overset{}\longrightarrow \dots,$$ and $$\tau^{\geq n}(X^{\bullet}):\dots\overset{}\longrightarrow 0\overset{}\longrightarrow 0\overset{}\longrightarrow X_{n}/\Ima(d_{n-1})\overset{\overline{d_{n}}}\longrightarrow X_{n+1}\overset{d_{n+1}}\longrightarrow X_{n+2}\overset{d_{n+2}}\longrightarrow \dots.$$

\begin{lemma}\label{truncations} Let $\zeta : Q_{0} \to Q_{1}$ be a homomorphism in $\Inj(R)$ and let $X^{\bullet}$ be an object in $\rmK(R)$. Then:
\begin{itemize}
    \item[(a)] If $\Hom_{\rmK(R)}(X^{\bullet},\zeta^{\bullet}) = 0$, then $\tau^{\leq 0}(X^{\bullet})\in {^{\perp_{\leq 0}}\zeta^{\bullet}}$;
    \item[(b)] If $\Hom_{\rmK(R)}(X^{\bullet},\zeta^{\bullet}[1]) = 0$, then $\tau^{\geq 0}(X^{\bullet})\in {^{\perp_{>0}}\zeta^{\bullet}}$.
\end{itemize}
\end{lemma}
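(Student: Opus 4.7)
The plan is to reduce each assertion, via Lemma \ref{zero_homology}, to a statement about $H^{0}(X^{\bullet})$, and then promote any test morphism on $H^{0}(X^{\bullet})$ to a chain map on the full complex $X^{\bullet}$ so that the hypothesis can be brought to bear.

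For part (a), since $\tau^{\leq 0}(X^{\bullet})$ belongs to $\rmD^{\leq 0}$ and $H^{0}(\tau^{\leq 0}(X^{\bullet})) = H^{0}(X^{\bullet})$, Lemma \ref{zero_homology}(b) reduces the goal to $\Hom_{R}(H^{0}(X^{\bullet}), T) = 0$. Given $g: H^{0}(X^{\bullet}) \to T$, I would postcompose with $T \hookrightarrow Q_{0}$, pull back along the projection $\Ker(d_{0}) \twoheadrightarrow H^{0}(X^{\bullet})$, and use the injectivity of $Q_{0}$ to extend along $\Ker(d_{0}) \hookrightarrow X_{0}$ to some $f_{0}: X_{0} \to Q_{0}$. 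Then $\zeta f_{0}$ vanishes on $\Ker(d_{0})$, hence factors through $\Ima(d_{0}) \hookrightarrow X_{1}$, and the injectivity of $Q_{1}$ lifts this to $f_{1}: X_{1} \to Q_{1}$ with $\zeta f_{0} = f_{1} d_{0}$. Padded by zeros in other degrees, $(f_{0},f_{1})$ is a chain map $X^{\bullet} \to \zeta^{\bullet}$ (the relation in degree $-1$ is $f_{0} d_{-1} = 0$, valid because $f_{0}|_{\Ker(d_{0})}$ factors through $H^{0}(X^{\bullet})$). The hypothesis makes this chain map null-homotopic; since $(\zeta^{\bullet})_{-1} = 0$, the homotopy component out of $X_{0}$ is forced to vanish, so the homotopy relation at degree $0$ simplifies to $f_{0} = h_{1} d_{0}$, which is zero on $\Ker(d_{0})$ and forces $g = 0$.

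For part (b), Lemma \ref{zero_homology}(a) identifies the conclusion with $H^{0}(X^{\bullet}) \in \CB_{\zeta}$. Via Lemma \ref{homotopy} applied to the induced differential $\overline{d_{0}}: X_{0}/\Ima(d_{-1}) \to X_{1}$, it suffices to show that every $f \in \Hom_{R}(X_{0}/\Ima(d_{-1}), Q_{1})$ decomposes as $s_{1} \overline{d_{0}} + \zeta s_{0}$. Precomposing $f$ with the projection $\pi: X_{0} \to X_{0}/\Ima(d_{-1})$ and placing the result in degree $0$ gives a chain map $X^{\bullet} \to \zeta^{\bullet}[1]$; the chain-map relation in degree $-1$ holds because $\pi d_{-1} = 0$. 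By hypothesis this chain map is null-homotopic. The homotopy relation at degree $-1$ yields $h_{0} d_{-1} = 0$ (since $(\zeta^{\bullet}[1])_{-2} = 0$), so $h_{0}$ descends through $\pi$ to some $s_{0}: X_{0}/\Ima(d_{-1}) \to Q_{0}$. Cancelling $\pi$ from the degree $0$ homotopy relation then produces the required decomposition, with $s_{1}$ taken to be the homotopy component $h_{1}: X_{1} \to Q_{1}$.

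The argument is essentially formal once the correct chain-level lifts are in place. The only pitfall I would flag is the temptation to construct $s_{0}$ in part (b) by manually modifying a lift, which generically fails because one cannot simultaneously control $s_{0}|_{\Ima(d_{-1})}$ and $\zeta s_{0}$; the present approach side-steps this by letting the null-homotopy itself deliver the factorization through $\pi$ via the identity $h_{0} d_{-1} = 0$. The key mechanism in both parts is that the zero terms of $\zeta^{\bullet}$ (respectively $\zeta^{\bullet}[1]$) in negative degrees either force the relevant homotopy component to vanish or make it factor automatically through the appropriate quotient.
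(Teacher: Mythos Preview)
Your proof is correct, but it follows a different route from the paper's. The paper works entirely at the level of the triangulated category: it applies $\Hom_{\rmD(R)}(-,\zeta^{\bullet})$ to the truncation triangle
\[
\tau^{\leq 0}(X^{\bullet}) \longrightarrow X^{\bullet} \longrightarrow \tau^{\geq 1}(X^{\bullet}) \longrightarrow \tau^{\leq 0}(X^{\bullet})[1]
\]
(and the analogous one for part (b)) and reads off the vanishing from the resulting long exact sequence, using only that $\zeta^{\bullet}$ is two-term to kill the flanking terms. It never invokes Lemma~\ref{zero_homology} or Lemma~\ref{homotopy} here.

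You instead descend to the module level: Lemma~\ref{zero_homology} converts each claim into a statement about $H^{0}(X^{\bullet})$, and you then verify that statement by hand, lifting a test morphism to an explicit chain map $X^{\bullet}\to\zeta^{\bullet}$ (resp.\ $\zeta^{\bullet}[1]$) and unpacking the null-homotopy supplied by the hypothesis. This is more elementary---no triangles or long exact sequences---and makes transparent exactly where the injectivity of $Q_{0},Q_{1}$ and the zero terms of $\zeta^{\bullet}$ enter. The price is more bookkeeping and a proof that is tightly coupled to the two-term shape, whereas the paper's argument is essentially formal once one has the truncation triangle. One minor point: in part~(b) the shift $\zeta^{\bullet}[1]$ carries a sign on its differential, so the degree-$0$ homotopy relation reads $f\pi = h_{1}d_{0} - \zeta h_{0}$; this is harmless since the sign is absorbed into $s_{0}$, but it is worth noting.
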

\begin{proof}(a) Since $\zeta^{\bullet}$ is a two-term complex, concentrated in degrees 0 and 1, is obvious that $\Hom_{\rmD(R)}(\tau^{\leq 0}(X^{\bullet}), \zeta^{\bullet}[i]) = 0$, for all $i \leq -1$. Now, consider the triangle induced by the standard $t$-structure $$\tau^{\leq 0}(X^{\bullet}) \longrightarrow X^{\bullet} \longrightarrow \tau^{\geq 1}(X^{\bullet}) \longrightarrow \tau^{\leq 0}(X^{\bullet})[1]$$and, by applying the contravariant functor $\Hom_{\rmD(R)}(-,\zeta^{\bullet})$, we obtain the exact sequence $$\Hom_{\rmD(R)}(X^{\bullet},\zeta^{\bullet}) \to \Hom_{\rmD(R)}(\tau^{\leq 0}(X^{\bullet}),\zeta^{\bullet}) \to \Hom_{\rmD(R)}(\tau^{\geq 1}(X^{\bullet})[-1],\zeta^{\bullet}).$$ From hypothesis we have that $\Hom_{\rmD(R)}(X^{\bullet},\zeta^{\bullet})=0$ and, since $\zeta^{\bullet}$ is a two-term complex in degrees 0 and 1, we also have that $\Hom_{\rmD(R)}(\tau^{\geq 1}(X^{\bullet})[-1],\zeta^{\bullet}) = 0$. Thus, $\Hom_{\rmD(R)}(\tau^{\leq 0}(X^{\bullet}),\zeta^{\bullet})=0$.

(b) It is obvious that $\Hom_{\rmD(R)}(\tau^{\geq 0}(X^{\bullet}), \zeta^{\bullet}[i]) = 0$, for all $i \geq 2$,  since $\zeta^{\bullet}$ is two-term complex concentrated in degrees 0 and 1. Considering the triangle $$\tau^{\leq -1}(X^{\bullet}) \longrightarrow X^{\bullet} \longrightarrow \tau^{\geq 0}(X^{\bullet}) \longrightarrow \tau^{\leq -1}(X^{\bullet})[1]$$and applying the contravariant functor $\Hom_{\rmD(R)}(-,\zeta^{\bullet})$, we get the exact sequence $$\Hom_{\rmD(R)}(\tau^{\leq -1}(X^{\bullet}),\zeta^{\bullet}) \to \Hom_{\rmD(R)}(\tau^{\geq 0}(X^{\bullet})[-1],\zeta^{\bullet}) \to \Hom_{\rmD(R)}(X^{\bullet}[-1],\zeta^{\bullet}).$$Since $\zeta^{\bullet}$ is a two-term complex concentrated in degrees 0 and 1, we have that $\Hom_{\rmD(R)}(\tau^{\leq -1}(X^{\bullet}),\zeta^{\bullet})=0$. By hypothesis, $\Hom_{\rmD(R)}(X^{\bullet}[-1],\zeta^{\bullet})=0$. Therefore $\Hom_{\rmD(R)}(\tau^{\geq 0}(X^{\bullet})[-1],\zeta^{\bullet})=0$, i.e. $\Hom_{\rmD(R)}(\tau^{\geq 0}(X^{\bullet}),\zeta^{\bullet}[1])=0$.
\end{proof}

We recall that an object $Y^{\bullet}$ of $\rmD(R)$ {\sl cogenerates} $\rmD(R)$, if whenever an object $X^{\bullet}$ in $\rmD(R)$ with $\Hom_{\rmD(R)}(X^{\bullet},Y^{\bullet}[i])=0$ for all $i\in\BBZ$, then $X^{\bullet}=0$ in $\rmD(R)$.

\begin{lemma}\label{cogenerates} Let $\zeta : Q_{0} \to Q_{1}$ be a homomorphism in $\Inj(R)$. Then the following are equivalent:
\begin{itemize}
    \item[(a)] $\zeta^{\bullet}$ cogenerates $\rmD(R)$;
    \item[(b)] ${^{\perp_{>0}}\zeta^{\bullet}} \subseteq {\rmD^{\geq 0}}$;
    \item[(c)] ${^{\perp_{\leq 0}}\zeta^{\bullet}} \subseteq {\rmD^{\leq 0}}$.
\end{itemize}
\end{lemma}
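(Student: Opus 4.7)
The plan is to prove $(a)\Leftrightarrow(b)$ and $(a)\Leftrightarrow(c)$ separately. The reverse implications $(b)\Rightarrow(a)$ and $(c)\Rightarrow(a)$ I would dispatch by a simple shift argument: if $\Hom_{\rmD(R)}(X^{\bullet},\zeta^{\bullet}[i])=0$ for every $i\in\BBZ$, then the same vanishing holds for every translate $X^{\bullet}[n]$, so each $X^{\bullet}[n]$ lies in $^{\perp_{>0}}\zeta^{\bullet}$ (respectively, in $^{\perp_{\leq 0}}\zeta^{\bullet}$). By (b) (resp.\ (c)) each such $X^{\bullet}[n]$ lies in $\rmD^{\geq 0}$ (resp.\ $\rmD^{\leq 0}$); letting $n$ range over $\BBZ$ forces every cohomology $H^j(X^{\bullet})$ to vanish, whence $X^{\bullet}=0$ in $\rmD(R)$.

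For the nontrivial direction $(a)\Rightarrow(b)$, let $X^{\bullet}\in {^{\perp_{>0}}\zeta^{\bullet}}$. To conclude $X^{\bullet}\in \rmD^{\geq 0}$ it suffices to check $\tau^{\leq -1}(X^{\bullet})=0$ in $\rmD(R)$, and by (a) this reduces to showing $\Hom_{\rmD(R)}(\tau^{\leq -1}(X^{\bullet}),\zeta^{\bullet}[i])=0$ for every $i\in\BBZ$. The crucial observation is that $\zeta^{\bullet}[i]$ is a two-term bounded complex of injectives, so by the remark after the definition of cosilting complex $\Hom_{\rmD(R)}$ agrees with $\Hom_{\rmK(R)}$, and any cochain map into $\zeta^{\bullet}[i]$ is recorded by components in the two degrees $-i,\,1-i$. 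For $i\leq 0$ these degrees are nonnegative while $\tau^{\leq -1}(X^{\bullet})$ is zero in degrees $\geq 0$, giving the vanishing immediately. For $i>0$ I would apply $\Hom_{\rmD(R)}(-,\zeta^{\bullet}[i])$ to the canonical triangle $\tau^{\leq -1}(X^{\bullet})\to X^{\bullet}\to \tau^{\geq 0}(X^{\bullet})\to \tau^{\leq -1}(X^{\bullet})[1]$; the term $\Hom_{\rmD(R)}(X^{\bullet},\zeta^{\bullet}[i])$ vanishes by hypothesis, while $\Hom_{\rmD(R)}(\tau^{\geq 0}(X^{\bullet})[-1],\zeta^{\bullet}[i]) = \Hom_{\rmD(R)}(\tau^{\geq 0}(X^{\bullet}),\zeta^{\bullet}[i+1])$ vanishes by the same disjoint-support trick, since $\zeta^{\bullet}[i+1]$ is now concentrated in strictly negative degrees.

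The implication $(a)\Rightarrow(c)$ is obtained by the dual bookkeeping: given $X^{\bullet}\in {^{\perp_{\leq 0}}\zeta^{\bullet}}$, I would show $\tau^{\geq 1}(X^{\bullet})=0$ in $\rmD(R)$ using the triangle $\tau^{\leq 0}(X^{\bullet})\to X^{\bullet}\to \tau^{\geq 1}(X^{\bullet})\to \tau^{\leq 0}(X^{\bullet})[1]$. For $i\geq 1$ the complex $\zeta^{\bullet}[i]$ is concentrated in nonpositive degrees, so $\Hom_{\rmD(R)}(\tau^{\geq 1}(X^{\bullet}),\zeta^{\bullet}[i])=0$ by disjoint supports; for $i\leq 0$ the long exact sequence reduces the claim to the vanishing of $\Hom_{\rmD(R)}(\tau^{\leq 0}(X^{\bullet})[1],\zeta^{\bullet}[i])=\Hom_{\rmD(R)}(\tau^{\leq 0}(X^{\bullet}),\zeta^{\bullet}[i-1])$, which again follows because $\zeta^{\bullet}[i-1]$ lives in strictly positive degrees.

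I do not expect a serious obstacle: the argument is essentially bookkeeping around the canonical $t$-structure on $\rmD(R)$, combined with the two facts that the target is a bounded complex of injectives (so $\Hom_{\rmD(R)}=\Hom_{\rmK(R)}$) and that $\zeta^{\bullet}$ is supported only in the two degrees $0,1$. The main thing to watch is the sign of the shift $i$ when locating the support of $\zeta^{\bullet}[i]$, so as not to conflate $\rmD^{\geq 0}$ with $\rmD^{\leq 0}$ in the dual argument.
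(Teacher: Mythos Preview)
Your proposal is correct and follows essentially the same approach as the paper: the reverse implications $(b)\Rightarrow(a)$ and $(c)\Rightarrow(a)$ are handled by the identical shift argument, and the forward implications $(a)\Rightarrow(b)$ and $(a)\Rightarrow(c)$ are proved by showing the relevant truncation ($\tau^{\leq -1}(X^{\bullet})$, resp.\ $\tau^{\geq 1}(X^{\bullet})$) is orthogonal to all shifts of $\zeta^{\bullet}$ via a combination of disjoint-support vanishing and the long exact sequence coming from the canonical truncation triangle. Your case-splitting is slightly more uniform than the paper's (you use the triangle for every $i>0$ in $(a)\Rightarrow(b)$ and every $i\leq 0$ in $(a)\Rightarrow(c)$, whereas the paper singles out $i=1,2$, resp.\ $i=-1,0$, and declares the remaining cases ``obvious''), but the content is the same.
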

\begin{proof}$(a)\Rightarrow(b)$ Suppose that $\zeta^{\bullet}$ cogenerates $\rmD(R)$. Let $X^{\bullet}\in{^{\perp_{>0}}\zeta^{\bullet}}$. From the fact that $\zeta^{\bullet}$ is a two-term complex, concentrated in degrees 0 and 1, it is obvious that $\Hom_{\rmD(R)}(\tau^{\leq -1}(X^{\bullet}),\zeta^{\bullet}[i])=0$, for all $i \leq 0$. Also, since $\Hom_{\rmD(R)}(X^{\bullet},\zeta^{\bullet}[i])=0$, for all $i>0$, is easy to see that $\Hom_{\rmD(R)}(\tau^{\leq -1}(X^{\bullet}), \zeta^{\bullet}[i]) = 0$, for all $i\geq 3$. Applying the functor $\Hom_{\rmD(R)}(-,\zeta^{\bullet})$ to the triangle $$\tau^{\leq -1}(X^{\bullet}) \longrightarrow X^{\bullet} \longrightarrow \tau^{\geq 0}(X^{\bullet}) \longrightarrow \tau^{\leq -1}(X^{\bullet})[1]$$ we obtain that $\Hom_{\rmD(R)}(\tau^{\leq -1}(X^{\bullet}),\zeta^{\bullet}[1]) = 0 = \Hom_{\rmD(R)}(\tau^{\leq -1}(X^{\bullet}),\zeta^{\bullet}[2])$.

In conclusion, $\Hom_{\rmD(R)}(\tau^{\leq -1}(X^{\bullet}), \zeta^{\bullet}[i]) = 0$, for all $i\in\BBZ$. Since $\zeta^{\bullet}$ cogenerates $\rmD(R)$, we have $\tau^{\leq -1}(X^{\bullet}) = 0$ in $\rmD(R)$, hence $\tau^{\leq -1}(X^{\bullet})$ is an exact sequence in $\mathrm{Mod}(R)$. Therefore $X^{\bullet}\in\rmD^{\geq 0}$.

$(b)\Rightarrow(a)$ Let $X^{\bullet}\in\rmD(R)$ such that $\Hom_{\rmD(R)}(X^{\bullet}, \zeta^{\bullet}[i]) = 0$, for all $i\in\BBZ$. If $j$ is an arbitrary integer, we have that  $X^{\bullet}[j] \in {^{\perp_{>0}}\zeta^{\bullet}}$. By hypothesis, $X^{\bullet}[j]\in\rmD^{\geq 0}$, so that $\Ima(d_{i})=\Ker(d_{i+1})$, for all $i \leq j-2$. Therefore $X^{\bullet}$ is an acyclic complex, i.e. $X^{\bullet} = 0$ in $\rmD(R)$.

$(a)\Rightarrow(c)$ Suppose that $\zeta^{\bullet}$ cogenerates $\rmD(R)$. Let $X^{\bullet} \in {^{\perp_{\leq 0}}\zeta^{\bullet}}$. It is obvious that $\Hom_{\rmD(R)}(\tau^{\geq 1}(X^{\bullet}),\zeta^{\bullet}[i])=0$, for all $i \ne -1,0$. We also have that $\Hom_{\rmD(R)}(\tau^{\geq 1}(X^{\bullet}),\zeta^{\bullet}[-1]) = 0 = \Hom_{\rmD(R)}(\tau^{\geq 1}(X^{\bullet}),\zeta^{\bullet})$, by applying the functor $\Hom_{\rmD(R)}(-,\zeta^{\bullet})$ to the triangle $$\tau^{\leq 0}(X^{\bullet}) \longrightarrow X^{\bullet} \longrightarrow \tau^{\geq 1}(X^{\bullet}) \longrightarrow \tau^{\leq 0}(X^{\bullet})[1].$$

Since $\zeta^{\bullet}$ cogenerates $\rmD(R)$, it follows that $\tau^{\geq 1}(X^{\bullet})=0$ in $\rmD(R)$, hence $\tau^{\geq 1}(X^{\bullet})$ is an exact sequence in $\mathrm{Mod}(R)$ and therefore $X^{\bullet} \in \rmD^{\leq 0}$.

$(c)\Rightarrow(a)$ Let $X^{\bullet}\in\rmD(R)$ such that $\Hom_{\rmD(R)}(X^{\bullet}, \zeta^{\bullet}[i]) = 0$, for all $i\in\BBZ$. If $j$ is an arbitrary integer, we have that  $X^{\bullet}[j] \in {^{\perp_{\leq 0}}\zeta^{\bullet}}$. By hypothesis, $X^{\bullet}[j]\in\rmD^{\leq 0}$, so that $\Ima(d_{i})=\Ker(d_{i+1})$, for all $i \geq j$. Therefore $X^{\bullet}$ is an acyclic complex, i.e. $X^{\bullet} = 0$ in $\rmD(R)$.
\end{proof}

As a consequence of the previous result, we obtain the following corollary.

\begin{corollary}\label{ZhangWei} Let $\zeta : Q_{0} \to Q_{1}$ be an homomorphism in $\Inj(R)$ such that $\zeta^{\bullet}$ is a partial cosilting complex. Then $\zeta^{\bullet}$ is a cosilting complex if and only if ${^{\perp_{> 0}}\zeta^{\bullet}} \subseteq \rmD^{\geq 0}$ if and only if ${^{\perp_{\leq 0}}\zeta^{\bullet}}\subseteq\rmD^{\leq 0}$.
\end{corollary}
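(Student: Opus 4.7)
The plan is to observe that this corollary is essentially a direct reformulation of Lemma \ref{cogenerates} combined with the definition of cosilting complex. By definition, a complex $T^\bullet$ is cosilting precisely when it satisfies conditions (1), (2) and (3); partial cosilting means it satisfies only (1) and (2). Thus, for a complex $\zeta^\bullet$ that is already assumed to be partial cosilting, being cosilting is equivalent to satisfying the additional condition (3), namely that $\zeta^\bullet$ cogenerates $\rmD(R)$.

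First I would note that since $\zeta$ is a homomorphism in $\Inj(R)$, the hypotheses of Lemma \ref{cogenerates} are satisfied. Then I would simply invoke the equivalence (a)$\Leftrightarrow$(b)$\Leftrightarrow$(c) of that lemma, which shows that $\zeta^\bullet$ cogenerates $\rmD(R)$ if and only if ${^{\perp_{>0}}\zeta^\bullet} \subseteq \rmD^{\geq 0}$ if and only if ${^{\perp_{\leq 0}}\zeta^\bullet} \subseteq \rmD^{\leq 0}$. Chaining this with the definitional observation above yields the stated chain of equivalences. There is no real obstacle here: the work has been done in Lemma \ref{cogenerates}, and the corollary amounts to repackaging that lemma under the standing partial cosilting hypothesis.
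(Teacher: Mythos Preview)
Your proposal is correct and matches the paper's approach exactly: the paper presents this corollary without a separate proof, simply as a consequence of the preceding Lemma \ref{cogenerates}, which is precisely the observation you make. The only content is the definitional remark that, under the standing partial cosilting hypothesis, being cosilting reduces to condition (3), and then Lemma \ref{cogenerates} supplies the equivalences.
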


Now, we give the main result of the paper.

\begin{theorem}\label{cosilting} Let $\zeta:Q_{0} \to Q_{1}$ be a homomorphism in $\Inj(R)$ with $T=\Ker(\zeta)$. The following statements are equivalent:
\begin{itemize}
   \item[(a)] $T$ is a cosilting $R$-module with respect to $\zeta$;
   \item[(b)] The pair $(^{\circ}T,\CB_{\zeta})$ is a torsion pair;
   \item[(c)] \begin{itemize}
                 \item[(i)] $\Hom_{\rmD(R)}(\zeta^{\bullet I},\zeta^{\bullet}[i])=0$, for all sets $I$ and for all $i>0$;
                 \item[(ii)] The class ${^{\perp_{>0}}\zeta^{\bullet}} \bigcap \rmD^{\geq 0}$ is closed under direct products.
                 \item[(iii)] $\zeta^{\bullet}$ cogenerates $\rmD(R)$.
              \end{itemize}
   \item[(d)] \begin{itemize}
                 \item[(i)] $\Hom_{\rmD(R)}(\zeta^{\bullet I},\zeta^{\bullet}[i])=0$, for all sets $I$ and for all $i>0$;
                 \item[(ii)] The class ${^{\perp_{>0}}\zeta^{\bullet}} \bigcap \rmD^{\geq 0}$ is closed under direct products.
                 \item[(iii)] ${^{\perp_{>0}}\zeta^{\bullet}} \subseteq {\rmD^{\geq 0}}$.
              \end{itemize}
   \item[(e)] \begin{itemize}
                 \item[(i)] $\Hom_{\rmD(R)}(\zeta^{\bullet I},\zeta^{\bullet}[i])=0$, for all sets $I$ and for all $i>0$;
                 \item[(ii)] The class ${^{\perp_{>0}}\zeta^{\bullet}} \bigcap \rmD^{\geq 0}$ is closed under direct products.
                 \item[(iii)] ${^{\perp_{\leq 0}}\zeta^{\bullet}} \subseteq {\rmD^{\leq 0}}$.
              \end{itemize}
\end{itemize}
\end{theorem}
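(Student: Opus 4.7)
The proof will route through two backbone implications, $(a)\Leftrightarrow(c)$ and $(a)\Leftrightarrow(b)$, since $(c)\Leftrightarrow(d)\Leftrightarrow(e)$ is immediate: clauses $(i)$ and $(ii)$ are identical in all three, and their respective clauses $(iii)$ are the three equivalent statements of Lemma~\ref{cogenerates}. Moreover, Proposition~\ref{partial_cosilting} identifies clauses $(i)$ and $(ii)$ with ``partial cosilting with respect to $\zeta$'', so in each direction of $(a)\Leftrightarrow(c)$ the first two clauses of (c) come for free once $T$ is known to be cosilting (resp.\ partial cosilting).

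For $(a)\Rightarrow(c)(iii)$ I would take $X^\bullet$ with $\Hom_{\rmD(R)}(X^\bullet,\zeta^\bullet[i])=0$ for every $i\in\BBZ$ and combine Lemmas~\ref{truncations} and~\ref{zero_homology}. From $\Hom(X^\bullet,\zeta^\bullet[1])=0$, Lemma~\ref{truncations}(b) gives $\tau^{\geq 0}(X^\bullet)\in{}^{\perp_{>0}}\zeta^\bullet$, so Lemma~\ref{zero_homology}(a) forces $H^0(X^\bullet)\in\CB_\zeta$; from $\Hom(X^\bullet,\zeta^\bullet)=0$, Lemma~\ref{truncations}(a) gives $\tau^{\leq 0}(X^\bullet)\in{}^{\perp_{\leq 0}}\zeta^\bullet$, so Lemma~\ref{zero_homology}(b) yields $H^0(X^\bullet)\in{}^\circ T$. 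Since (a) supplies $\CB_\zeta=\Cogen(T)$, any element of $\Cogen(T)\cap{}^\circ T$ embeds in a power $T^I$ via a family of maps lying in $\Hom(-,T)=0$, hence is zero; so $H^0(X^\bullet)=0$. Shifting $X^\bullet$ kills every cohomology, giving $X^\bullet=0$ in $\rmD(R)$.

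For $(c)\Rightarrow(a)$ the inclusion $\Cogen(T)\subseteq\CB_\zeta$ is Lemma~\ref{inclusions}, and the remaining task is $\CB_\zeta\subseteq\Cogen(T)$. Given $Y\in\CB_\zeta$ let $K$ be the kernel of the canonical evaluation $Y\to T^{\Hom_R(Y,T)}$. By Lemma~\ref{closure_prop}(2) one has $K\in\CB_\zeta$, and by construction $\Hom_R(K,T)=0$, i.e.\ $K\in{}^\circ T$. Viewing $K$ as a stalk complex $K^\bullet$ in degree zero and applying Lemma~\ref{zero_homology}(a)(b) places $K^\bullet$ in ${}^{\perp_{>0}}\zeta^\bullet\cap{}^{\perp_{\leq 0}}\zeta^\bullet$, so $\Hom_{\rmD(R)}(K^\bullet,\zeta^\bullet[i])=0$ for every $i\in\BBZ$. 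Cogeneration forces $K=0$, hence $Y\hookrightarrow T^{\Hom_R(Y,T)}$ lies in $\Cogen(T)$.

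For $(a)\Leftrightarrow(b)$, the three closure properties making $\CB_\zeta$ a torsion-free class come from Lemma~\ref{closure_prop}(2)(3) (submodules and extensions, using $Q_0,Q_1\in\Inj(R)$) and the cosilting hypothesis (products); the identification ${}^\circ\CB_\zeta={}^\circ T$ then follows from $T\in\CB_\zeta\subseteq\Cogen(T)$ by the same embedding observation used in the first paragraph. Conversely, (b) immediately supplies $T\in({}^\circ T)^\circ=\CB_\zeta$ and closure of $\CB_\zeta$ under products, so $T$ is partial cosilting; and the kernel-of-evaluation argument of $(c)\Rightarrow(a)$ again yields $\CB_\zeta\subseteq\Cogen(T)$, except that here the vanishing of $K$ is produced by the torsion-pair orthogonality $\Hom_R({}^\circ T,\CB_\zeta)=0$ applied to $\mathrm{id}_K\in\Hom_R(K,K)$. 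The main obstacle is this recurring kernel-of-evaluation step: in both $(c)\Rightarrow(a)$ and $(b)\Rightarrow(a)$ one must show that $K\in\CB_\zeta\cap{}^\circ T$ vanishes, once via cogeneration of $\rmD(R)$ (applied to the stalk complex $K^\bullet$) and once via the torsion-pair orthogonality; the rest of the argument is essentially bookkeeping around Lemmas~\ref{closure_prop}, \ref{zero_homology}, \ref{truncations} and~\ref{cogenerates}.
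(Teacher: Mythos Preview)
Your argument is essentially correct and follows the same architecture as the paper (Lemmas~\ref{zero_homology}, \ref{truncations}, \ref{cogenerates} combined with Proposition~\ref{partial_cosilting}), but there is one step you have under-justified and one genuine difference in route worth noting.

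\textbf{The under-justified step.} In both $(c)\Rightarrow(a)$ and $(b)\Rightarrow(a)$ you take $K=\ker\bigl(Y\to T^{\Hom_R(Y,T)}\bigr)$ and assert ``by construction $\Hom_R(K,T)=0$''. This is not immediate from the definition of the reject: the definition only tells you that every map $Y\to T$ vanishes on $K$, i.e.\ that the restriction map $\Hom_R(Y,T)\to\Hom_R(K,T)$ is zero, not that the target is zero. (Over $R=k[x]/(x^2)$ with $T=k$ and $Y=R$, the reject is $xR\cong k$ and $\Hom_R(xR,T)\neq 0$.) What makes it work here is that $T$ is already partial cosilting, so Lemma~\ref{inclusions} gives $Y/K\in\Cogen(T)\subseteq{^{\perp}T}$, hence $\Ext^1_R(Y/K,T)=0$, and therefore any $g\colon K\to T$ extends to $Y\to T$ and must vanish on $K$. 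Once you insert this sentence, the kernel-of-evaluation argument is complete.

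\textbf{Comparison with the paper.} The paper does not argue $(a)\Leftrightarrow(b)$ directly; it cites \cite[Corollary~3.5]{BreazPop_preprint}. It then proves $(b)\Rightarrow(c)\Rightarrow(a)$, and in $(c)\Rightarrow(a)$ it again invokes \cite[Corollary~3.5]{BreazPop_preprint} to obtain the torsion pair $({^{\circ}T},\Cogen(T))$ for partial cosilting $T$, taking the torsion submodule rather than the reject. Your kernel-of-evaluation argument is effectively a self-contained reproof of (the needed part of) that external corollary, so your route is more elementary and avoids the citation, at the cost of having to supply the $\Ext$-vanishing step above. Everything else---the use of Lemmas~\ref{truncations} and~\ref{zero_homology} to show each $H^i(X^\bullet)\in\CB_\zeta\cap{^{\circ}T}$, and the passage $(c)\Leftrightarrow(d)\Leftrightarrow(e)$ via Lemma~\ref{cogenerates}---matches the paper exactly.
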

\begin{proof}(a)$\Leftrightarrow$(b) See \cite[Corollary 3.5]{BreazPop_preprint}.

(b)$\Rightarrow$(c) Suppose that $(^{\circ}T,\CB_{\zeta})$ is a torsion pair. Since $T$ is (partial) cosilting with respect to $\zeta$, we have, by Proposition \ref{partial_cosilting}, that (i) and (ii) hold.

Let $X^{\bullet} \in \rmD(R)$ such that $\Hom_{\rmD(R)}(X^{\bullet},\zeta^{\bullet}[i]) = 0$ for all $i \in \BBZ$. By Lemma \ref{truncations} (a), $\tau^{\leq 0}(X^{\bullet}[i]) \in {^{\perp_{\leq 0}}\zeta^{\bullet}}\cap\rmD^{\leq 0}$, and then we have, by Lemma \ref{zero_homology}(b), that $H^{i}(X^{\bullet}) = H^{0}(\tau^{\leq 0}(X^{\bullet}[i]))$ belongs to ${^{\circ}T}$. We also have, by Lemma \ref{truncations} (b), that $\tau^{\geq 0}(X^{\bullet}[i]) \in {^{\perp_{> 0}}\zeta^{\bullet}}\cap\rmD^{\geq 0}$, and then we obtain, by applying Lemma \ref{zero_homology}(a), that $H^{i}(X^{\bullet}) = H^{0}(\tau^{\geq 0}(X^{\bullet}[i]))$ lies in $\CB_{\zeta}$. Then $H^{i}(X^{\bullet}) \in {^{\circ}T} \bigcap \CB_{\zeta}$. From the fact that $(^{\circ}T,\CB_{\zeta})$ is a torsion pair, we obtain that $H^{i}(X^{\bullet}) = 0$, hence $X^{\bullet}$ is acyclic. Therefore $X^{\bullet} = 0$ in $\rmD(R)$.

(c)$\Rightarrow$(a) From (i) and (ii), we have, by Proposition \ref{partial_cosilting}, that $T$ is partial cosilting with respect to $\zeta$, hence, by Lemma \ref{inclusions}, we have $\Cogen(T) \subseteq \CB_{\zeta}$.

Now, let $X \in \CB_{\zeta}$. By \cite[Corollary 3.5]{BreazPop_preprint}, we have that the pair $({^{\circ}T},\Cogen(T))$ is a torsion pair, hence there is a submodule $Y$ of $X$ such that $Y \in {^{\circ}T}$ and $X/Y \in \Cogen(T)$. If we see $Y$ as a complex concentrated in degree zero then it is obvious that $\Hom_{\rmD(R)}(Y^{\bullet},\zeta^{\bullet}[i]) = 0$, for all $i \ne 0,1$. By Lemma \ref{homotopy}, it is easy to deduce that the class $\CB_{\zeta}$ coincides with the class of all $R$-modules $Z$ such that $\Hom_{\rmD(R)}(Z^{\bullet},\zeta^{\bullet}[1])=0$. Since $X$ is in the class $\CB_{\zeta}$, $Y$ is a submodule of $X$ and the class $\CB_{\zeta}$ is closed under submodules, we have that $ \Hom_{\rmD(R)}(Y^{\bullet}, \zeta^{\bullet}[1])=0$. Moreover, we observe that $\Hom_{\rmD(R)}(Z^{\bullet},H^{0}(\zeta^{\bullet})) \cong \Hom_{\rmD(R)}(Z^{\bullet},\zeta^{\bullet})$, for any $R$-module $Z$. Since $Y \in {^{\circ}T}$, i.e. $\Hom_{R}(Y,T)=0$, we have $\Hom_{\rmD(R)}(Y^{\bullet},H^{0}(\zeta^{\bullet}))=0$, hence $\Hom_{\rmD(R)}(Y^{\bullet},\zeta^{\bullet})=0$. It follows that $\Hom_{\rmD(R)}(Y^{\bullet},\zeta^{\bullet}[i]) = 0$, for all $i \in \BBZ$, hence, by (iii), we have that $Y^{\bullet} = 0$ in $\rmD(R)$. It follows that $Y^{\bullet}$ is acyclic, so that $Y = 0$ in $\mathrm{Mod}(R)$. Therefore $X \in \Cogen(T)$.

(c)$\Leftrightarrow$(d)$\Leftrightarrow$(e) It follows by Lemma \ref{cogenerates}.
\end{proof}

\begin{corollary}\label{cosilting_complex} Let $\zeta:Q_{0} \to Q_{1}$ be a homomorphism in $\Inj(R)$ with $T=\Ker(\zeta)$. The following statements are equivalent:
\begin{itemize}
   \item[(a)] $T$ is a cosilting $R$-module with respect to $\zeta$;
   \item[(b)] The pair $(^{\circ}T,\CB_{\zeta})$ is a torsion pair;
   \item[(c)] $\zeta^{\bullet}$ is a cosilting complex;
\end{itemize}
\end{corollary}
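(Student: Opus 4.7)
The statement consolidates Theorem~\ref{cosilting} with Lemma~\ref{cogenerates}: the three conditions defining a cosilting complex correspond, after passage through Lemma~\ref{cogenerates}, to the three clauses of Theorem~\ref{cosilting}(c). The equivalence $(a)\Leftrightarrow(b)$ is already the content of Theorem~\ref{cosilting}$(a)\Leftrightarrow(b)$, so nothing new is required for it.

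For $(a)\Rightarrow(c)$, I apply Theorem~\ref{cosilting}$(a)\Rightarrow(c)$: clause $(i)$ is prod-semi-selforthogonality, which is exactly condition $(2)$ in the definition of a cosilting complex, and clause $(iii)$ is cogeneration of $\rmD(R)$ by $\zeta^{\bullet}$, which is condition $(3)$. Condition $(1)$, $\zeta^{\bullet}\in\rmK^{b}(\Inj(R))$, is automatic since $\zeta^{\bullet}$ is a two-term complex whose terms are in $\Inj(R)$.

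For $(c)\Rightarrow(a)$, the plan is to invoke Theorem~\ref{cosilting}$(c)\Rightarrow(a)$ after verifying its three clauses. Clauses $(i)$ and $(iii)$ are immediate from conditions $(2)$ and $(3)$ of a cosilting complex. For clause $(ii)$, Lemma~\ref{cogenerates} applied to $(3)$ yields ${^{\perp_{>0}}\zeta^{\bullet}}\subseteq\rmD^{\geq 0}$, so the intersection ${^{\perp_{>0}}\zeta^{\bullet}}\cap\rmD^{\geq 0}$ collapses to ${^{\perp_{>0}}\zeta^{\bullet}}$; by Lemma~\ref{zero_homology}(a) applied to termwise products of complexes placed in degree zero, its closure under direct products is equivalent to closure of $\CB_{\zeta}$ under direct products.

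The main obstacle is precisely this last closure property. I plan to derive it by combining prod-semi-selforthogonality with the observation that cogeneration of $\rmD(R)$ forces $Q_{0}\oplus Q_{1}$ to be an injective cogenerator of $\Mod R$: any nonzero $X$ with $\Hom_{R}(X,Q_{0})=\Hom_{R}(X,Q_{1})=0$ admits only zero chain maps from $X^{\bullet}$ (in degree zero) to every shift $\zeta^{\bullet}[i]$, so $\Hom_{\rmD(R)}(X^{\bullet},\zeta^{\bullet}[i])=0$ for all $i\in\BBZ$, contradicting cogeneration. Given a family $(X_{\alpha})\subseteq\CB_{\zeta}$ and a homomorphism $h\colon\prod X_{\alpha}\to Q_{1}$, I would extend $h$ along an embedding $\prod X_{\alpha}\hookrightarrow(Q_{0}\oplus Q_{1})^{J}$ furnished by the cogenerator, and then use the vanishing $\Hom_{\rmD(R)}(\zeta^{\bullet J},\zeta^{\bullet}[1])=0$ in the style of the lifting computation in Proposition~\ref{partial_cosilting}$(b)\Rightarrow(a)$ to produce a factorization of $h$ through $\zeta$. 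Once clause $(ii)$ is in hand, Theorem~\ref{cosilting}$(c)\Rightarrow(a)$ delivers $(a)$.
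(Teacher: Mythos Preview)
Your reductions for $(a)\Leftrightarrow(b)$ and $(a)\Rightarrow(c)$ are correct and coincide with the paper's one-line proof, which simply invokes Theorem~\ref{cosilting} together with Corollary~\ref{ZhangWei}. You go further than the paper in isolating clause~(ii) of Theorem~\ref{cosilting}(c) as the only non-immediate point in the direction $(c)\Rightarrow(a)$; the paper does not spell this out, and your observation that cogeneration forces $Q_0\oplus Q_1$ to be an injective cogenerator is correct and to the point.

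The proposed verification of clause~(ii), however, does not close. The vanishing $\Hom_{\rmD(R)}(\zeta^{\bullet J},\zeta^{\bullet}[1])=0$ governs maps out of $Q_0^J$, not out of $(Q_0\oplus Q_1)^J$. If one identifies $(Q_0\oplus Q_1)^J$ with the degree-$0$ term of $\zeta^{\bullet J}\oplus(\mathrm{id}_{Q_1^J})^{\bullet}$ and uses contractibility of the second summand, the resulting null-homotopy reads $\tilde h=\zeta s_0+s_1 d$ with $d=\zeta^J\oplus\mathrm{id}_{Q_1^J}$; restricting along $\iota=(\iota_0,\iota_1)$ leaves the remainder $s_1(\zeta^J\iota_0,\iota_1)$, which has no reason to factor through $\zeta$. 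The analogy with Proposition~\ref{partial_cosilting}$(b)\Rightarrow(a)$ breaks exactly here: there the inclusion $\sigma\colon T\hookrightarrow Q_0$ satisfies $\zeta\sigma=0$, which kills the remainder, whereas your embedding has no such property. Even refining the embedding to land in $Q_0^J$ (your cogenerator argument plus $X_\alpha\in\CB_\zeta$ does give $X_\alpha\in\Cogen(Q_0)$) still leaves $s_1\zeta^J\iota$, and this vanishes only if $\iota$ already factors through $T^J$, i.e.\ only if $\prod X_\alpha\in\Cogen(T)$ --- which is the conclusion sought. So the lifting scheme, as outlined, is circular; a genuinely different idea is needed to obtain the product closure of $\CB_\zeta$ from conditions (2) and (3) alone.
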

\begin{proof} It follows by Theorem \ref{cosilting} and Corollary \ref{ZhangWei}.
\end{proof}

\begin{definition} Let $\CALD$ be a triangulated category. A {\sl $t$-structure} in $\CALD$ is a pair of subcategories $(\CV^{\leq 0}, \CV^{\geq 0})$ such that:
\begin{itemize}
   \item[(1)] $\Hom_{\CALD}(\CV^{\leq 0}, \CV^{\geq 0}[-1])=0$;
   \item[(2)] $\CV^{\leq 0}[1]\subseteq\CV^{\leq 0}$;
   \item[(3)] For every $X$ in $\CALD$, there is a triangle $$Y \to X \to W \to Y[1]$$ such that $Y$ lies in $\CV^{\leq 0}$ and $W$ lies in $\CV^{\geq 0}[-1]$.
\end{itemize}
For a $t$-structure $(\CV^{\leq 0}, \CV^{\geq 0})$, the intersection $\CV^{\leq 0}\cap\CV^{\geq 0}$ is called the {\sl heart} and $\CV^{\leq 0}$ is called the {\sl aisle}.

\end{definition}

\begin{proposition}\cite[Proposition 2.1]{HappelReitenSmalo_1996} If $(\CT,\CF)$ is a torsion pair in an abelian category $\CA$, then the pair $(\CALD^{\leq 0},\CALD^{\geq 0})$ is a t-structure on $\rmD^{b}(\CA)$, where $$\CALD^{\leq 0} = \{ X^{\bullet}\in\rmD^{b}(\CA) | H^{i}(X^{\bullet})=0 \text{ for all } i>0, H^{0}(X^{\bullet})\in\CT \}$$ and $$\CALD^{\geq 0} = \{ X^{\bullet}\in\rmD^{b}(\CA) | H^{i}(X^{\bullet})=0 \text{ for all } i<-1, H^{-1}(X^{\bullet})\in\CF \}.$$
\end{proposition}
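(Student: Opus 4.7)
The plan is to verify the three $t$-structure axioms for $(\CALD^{\leq 0},\CALD^{\geq 0})$, combining the standard truncations on $\rmD^{b}(\CA)$ with the torsion pair decomposition applied at the cut degree $0$. Axiom (2), $\CALD^{\leq 0}[1]\subseteq\CALD^{\leq 0}$, is essentially immediate: if $X^{\bullet}\in\CALD^{\leq 0}$, then $H^{i}(X^{\bullet}[1])=H^{i+1}(X^{\bullet})=0$ for all $i\geq 0$, and in particular $H^{0}(X^{\bullet}[1])=0\in\CT$.

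For the Hom-vanishing (1), given $Y\in\CALD^{\leq 0}$ and $W\in\CALD^{\geq 0}[-1]$, I would split $\Hom_{\rmD^{b}(\CA)}(Y,W)$ using the standard truncation triangles
$$\tau^{\leq -1}(Y)\to Y\to H^{0}(Y)[0]\to\tau^{\leq -1}(Y)[1]$$
and
$$H^{0}(W)[0]\to W\to \tau^{\geq 1}(W)\to H^{0}(W)[1].$$
Applying $\Hom(Y,-)$ to the second triangle reduces the computation to $\Hom(Y,H^{0}(W)[0])$; applying $\Hom(-,H^{0}(W)[0])$ to the first triangle reduces this further to $\Hom_{\CA}(H^{0}(Y),H^{0}(W))$. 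The two remainder terms $\Hom(Y,\tau^{\geq 1}(W))$ and $\Hom(\tau^{\leq -1}(Y),H^{0}(W)[0])$ vanish by Hom-vanishing for the standard $t$-structure on $\rmD^{b}(\CA)$, while the surviving $\Hom_{\CA}(H^{0}(Y),H^{0}(W))$ vanishes because $H^{0}(Y)\in\CT$, $H^{0}(W)\in\CF$, and $(\CT,\CF)$ is a torsion pair.

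The main work is axiom (3): for an arbitrary $X^{\bullet}=(X^{i},d^{i})\in\rmD^{b}(\CA)$ I need to construct the new truncation triangle by hand. Write $H=H^{0}(X^{\bullet})=\Ker(d^{0})/\Ima(d^{-1})$ with canonical sequence $0\to tH\to H\to H/tH\to 0$, and let $Z\subseteq\Ker(d^{0})$ be the preimage of $tH$ under the projection $\Ker(d^{0})\twoheadrightarrow H$. Since $\Ima(d^{-1})\subseteq Z\subseteq\Ker(d^{0})$, the subcomplex
$$Y:\;\dots\to X^{-2}\to X^{-1}\to Z\to 0\to 0\to\dots$$
is well defined, with $H^{i}(Y)=H^{i}(X^{\bullet})$ for $i<0$, $H^{0}(Y)=tH\in\CT$, and $H^{i}(Y)=0$ for $i>0$, so $Y\in\CALD^{\leq 0}$. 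The quotient $W:=X^{\bullet}/Y$ is concentrated in degrees $\geq 0$ and satisfies $H^{0}(W)=H/tH\in\CF$ with $H^{i}(W)=H^{i}(X^{\bullet})$ for $i\geq 1$, hence $W\in\CALD^{\geq 0}[-1]$. The short exact sequence $0\to Y\to X^{\bullet}\to W\to 0$ of bounded complexes yields the required triangle in $\rmD^{b}(\CA)$. The delicate point I would verify most carefully is the identification $H^{0}(W)=H/tH$ together with its membership in $\CF$; the remaining checks are routine bookkeeping with the standard truncations and the torsion pair axioms.
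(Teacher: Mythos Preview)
Your argument is correct and is essentially the classical Happel--Reiten--Smal{\o} construction: verify the aisle is closed under positive shift, reduce the Hom-vanishing to $\Hom_{\CA}(H^{0}(Y),H^{0}(W))=0$ via the standard truncation triangles, and build the new truncations by pulling back the torsion submodule of $H^{0}(X^{\bullet})$ into $\Ker(d^{0})$. The identification $H^{0}(W)\cong H/tH$ is exactly $\Ker(d^{0})/Z$, so your ``delicate point'' goes through without difficulty.

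Note, however, that the paper does \emph{not} prove this proposition at all: it is quoted verbatim as \cite[Proposition~2.1]{HappelReitenSmalo_1996} and used as a black box in the subsequent Proposition~\ref{cosilting_t_structure}. So there is nothing to compare against in the paper itself; what you have written is (a clean version of) the original Happel--Reiten--Smal{\o} proof, and is entirely adequate if you wish to make the paper self-contained.
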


Now we have the following result which states that every cosilting module induces a $t$-structure.

\begin{proposition}\label{cosilting_t_structure} Let $\zeta:Q_{0} \to Q_{1}$ be a homomorphism in $\Inj(R)$ with the kernel $T$. If $T$ is a cosilting module with respect to $\zeta$, then the pair $({^{\perp_{\leq 0}}\zeta^{\bullet}},{^{\perp_{>1}}\zeta^{\bullet}})$ is a t-structure on $\rmD^{b}(R)$ (built from the torsion pair $(^{\circ}T,\CB_{\zeta})$).
\end{proposition}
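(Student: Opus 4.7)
The plan is to invoke the Happel--Reiten--Smal\o{} tilt at the torsion pair produced by the cosilting hypothesis, and then identify the resulting aisle and coaisle with ${^{\perp_{\leq 0}}\zeta^{\bullet}}$ and ${^{\perp_{>1}}\zeta^{\bullet}}$ respectively via the orthogonality/cohomology dictionary provided by Lemmas \ref{zero_homology} and \ref{cogenerates}. Once the shifts are tracked correctly, the whole statement reduces to a routine verification.

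First I would use Corollary \ref{cosilting_complex} to extract from the cosilting hypothesis both that $({^{\circ}T},\CB_{\zeta})$ is a torsion pair in $\Modr R$ and that $\zeta^{\bullet}$ cogenerates $\rmD(R)$; by Lemma \ref{cogenerates} the latter yields ${^{\perp_{>0}}\zeta^{\bullet}} \subseteq \rmD^{\geq 0}$ and ${^{\perp_{\leq 0}}\zeta^{\bullet}} \subseteq \rmD^{\leq 0}$. Applying the cited Happel--Reiten--Smal\o{} proposition to $({^{\circ}T},\CB_{\zeta})$ produces a $t$-structure $(\CALD^{\leq 0},\CALD^{\geq 0})$ on $\rmD^{b}(R)$ with
$$\CALD^{\leq 0} = \{X^{\bullet} \mid H^{i}(X^{\bullet})=0 \text{ for } i>0,\ H^{0}(X^{\bullet}) \in {^{\circ}T}\}$$
and
$$\CALD^{\geq 0} = \{X^{\bullet} \mid H^{i}(X^{\bullet})=0 \text{ for } i<-1,\ H^{-1}(X^{\bullet}) \in \CB_{\zeta}\}.$$
What remains is to verify the two equalities $\CALD^{\leq 0} = {^{\perp_{\leq 0}}\zeta^{\bullet}} \cap \rmD^{b}(R)$ and $\CALD^{\geq 0} = {^{\perp_{>1}}\zeta^{\bullet}} \cap \rmD^{b}(R)$.

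For the aisle, a complex $X^{\bullet} \in \CALD^{\leq 0}$ lies in $\rmD^{\leq 0}$ with $H^{0}(X^{\bullet}) \in {^{\circ}T}$, so Lemma \ref{zero_homology}(b) places it in ${^{\perp_{\leq 0}}\zeta^{\bullet}}$; conversely, any $X^{\bullet} \in {^{\perp_{\leq 0}}\zeta^{\bullet}} \cap \rmD^{b}(R)$ is automatically in $\rmD^{\leq 0}$ by Lemma \ref{cogenerates}(c), and a second application of Lemma \ref{zero_homology}(b) identifies $H^{0}(X^{\bullet})$ as an element of ${^{\circ}T}$. For the coaisle, I would use the shift identity $\Hom_{\rmD(R)}(X^{\bullet},\zeta^{\bullet}[i]) = \Hom_{\rmD(R)}(X^{\bullet}[-1],\zeta^{\bullet}[i-1])$ to rewrite the condition $X^{\bullet} \in {^{\perp_{>1}}\zeta^{\bullet}}$ as $X^{\bullet}[-1] \in {^{\perp_{>0}}\zeta^{\bullet}}$. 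Lemma \ref{cogenerates}(b) then forces $X^{\bullet}[-1] \in \rmD^{\geq 0}$ (equivalently, $H^{i}(X^{\bullet}) = 0$ for $i < -1$), and Lemma \ref{zero_homology}(a) applied to $X^{\bullet}[-1]$ gives $H^{-1}(X^{\bullet}) = H^{0}(X^{\bullet}[-1]) \in \CB_{\zeta}$; reversing each step provides the opposite inclusion.

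The only genuinely delicate point I anticipate is matching the index $>1$ appearing in ${^{\perp_{>1}}\zeta^{\bullet}}$ with the index $>0$ used in the lemmas, which is the reason for the $[-1]$-shift trick in the coaisle verification. Everything else is essentially bookkeeping: the Happel--Reiten--Smal\o{} result already does the heavy lifting of producing a $t$-structure out of the torsion pair $({^{\circ}T},\CB_{\zeta})$, and the cosilting assumption (through Corollary \ref{cosilting_complex} and Lemma \ref{cogenerates}) is precisely what makes the homological description of that $t$-structure coincide with the two perpendicularity classes in the statement.
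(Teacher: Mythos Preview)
Your proposal is correct and follows essentially the same route as the paper: extract the torsion pair $({^{\circ}T},\CB_{\zeta})$ and the cogeneration of $\rmD(R)$ from the cosilting hypothesis, apply the Happel--Reiten--Smal\o{} construction, and then identify the aisle and coaisle with ${^{\perp_{\leq 0}}\zeta^{\bullet}}$ and ${^{\perp_{>1}}\zeta^{\bullet}}$ via Lemma~\ref{zero_homology} and Lemma~\ref{cogenerates}, using exactly the $[-1]$-shift for the coaisle. The only cosmetic difference is that the paper cites Theorem~\ref{cosilting} rather than Corollary~\ref{cosilting_complex} to obtain the needed ingredients, which is immaterial.
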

\begin{proof} Assume that $T$ is cosilting with respect to $\zeta$. Then we have, by Theorem \ref{cosilting}, that $\zeta^{\bullet}$ cogenerates $\rmD(R)$ and the pair $(^{\circ}T,\CB_{\zeta})$ is a torsion pair. It follows that the pair $(\overline{\CALD}^{\leq 0},\overline{\CALD}^{\geq 0})$ is a t-structure on $\rmD^{b}(R)$, where the considered classes are defined as follows: $$\overline{\CALD}^{\leq 0} = \{ X^{\bullet}\in\rmD^{\leq 0} | H^{0}(X^{\bullet}) \in {^{\circ}T} \}$$and$$\overline{\CALD}^{\geq 0} = \{ X^{\bullet} \in \rmD^{\geq -1} | H^{-1}(X^{\bullet}) \in \CB_{\zeta} \}.$$

By Lemma \ref{zero_homology}(b), it is easy to see that $\overline{\CALD}^{\leq 0} \subseteq {^{\perp_{\leq 0}}\zeta^{\bullet}}$. If $X^{\bullet}\in{^{\perp_{\leq 0}}\zeta^{\bullet}}$, then, by Lemma \ref{cogenerates}, we have $X^{\bullet}\in\rmD^{\leq 0}$, hence, by Lemma \ref{zero_homology}(b), $X_{0}/\Ima(d_{-1}) = H^{0}(X^{\bullet})\in{^{\circ}T}$. It follows that $X^{\bullet} \in \overline{\CALD}^{\leq 0}$. Therefore $\overline{\CALD}^{\leq 0} = {^{\perp_{\leq 0}}\zeta^{\bullet}}$.

If $X^{\bullet}\in\overline{\CALD}^{\geq 0}$, then $X^{\bullet}[-1]\in\rmD^{\geq 0}$ and $H^{0}(X^{\bullet}[-1])\in\CB_{\zeta}$, hence, by Lemma \ref{zero_homology}(a), we obtain that $X^{\bullet}[-1] \in {^{\perp_{>0}}\zeta^{\bullet}}$, so that $X^{\bullet}\in{^{\perp_{>1}}\zeta^{\bullet}}$. If $X^{\bullet} \in {^{\perp_{>1}}\zeta^{\bullet}}$, then $X^{\bullet}[-1]\in{^{\perp_{>0}}\zeta^{\bullet}}$, hence, by Lemma \ref{cogenerates}, $X^{\bullet}[-1]\in\rmD^{\geq 0}$, so that $X^{\bullet}\in\rmD^{\geq -1}$. By Lemma \ref{zero_homology}(a), we have $\Ker(d_{-1}) = H^{0}(X^{\bullet}[-1])\in\CB_{\zeta}$. Thus $X^{\bullet}\in\overline{\CALD}^{\geq 0}$. Therefore $\overline{\CALD}^{\geq 0} = {^{\perp_{>1}}\zeta^{\bullet}}$.
\end{proof}

\subsection*{Acknowledgement} I would like to thank George Ciprian Modoi for his helpful suggestions and comments that improve this paper. Furthermore, I present my thanks to the referee for his/her suggestions that also improve this article.

\end{document}